\newtheorem{theorem}{Theorem}
\newtheorem{lemma}[theorem]{Lemma}
\newcommand{\enma}[1]   {\ensuremath{#1}}
\newcommand{\beq}{\begin{equation}}
\newcommand{\eeq}{\end{equation}}
\newcommand{\bseq}{\begin{subequations}}
\newcommand{\eseq}{\end{subequations}}
\newcommand{\beqn}{\begin{eqnarray}}
\newcommand{\eeqn}{\end{eqnarray}}
\newcommand{\ba}{\begin{array}}
\newcommand{\ea}{\end{array}}
\newcommand{\bct}{\begin{center}}
\newcommand{\ect}{\end{center}}
\newcommand{\btmz}{\begin{itemize}}
\newcommand{\etmz}{\end{itemize}}
\newcommand{\benum}{\begin{enumerate}}
\newcommand{\eenum}{\end{enumerate}}
\newcommand{\cL}{\enma{\mathcal L}}
\newcommand{\norm}[1]{\| #1 \|}                 
\newcommand{\diag}      {\enma{\mathrm{diag}}}
\newcommand{\trace}     {\enma{\mathrm{trace}}}
\newcommand{\inner}[2]{\left\langle #1,#2 \right\rangle}
\newcommand{\matbegin}{
        \left[
}
\newcommand{\matend}{
        \right]
}
\newcommand{\tbo}[2]{
  \matbegin \begin{array}{c}
       #1 \\ #2
       \end{array} \matend }
\newcommand{\obt}[2]{
  \matbegin \begin{array}{cc}
       #1 & #2
       \end{array} \matend }
\newcommand{\tbt}[4]{
  \matbegin \begin{array}{cc}
       #1 & #2 \\ #3 & #4
       \end{array} \matend }
\newcommand{\be}{\begin{equation}}
\newcommand{\ee}{\end{equation}}
\newcommand{\cplxs}{ C\kern -.35em \rule{0.03 em}{.7 ex}~   }
\def\complex{\hbox{C\kern -.45em \rule{0.03 em}{1.5 ex}}~}
\newcommand{\bi}{\begin{itemize}}
\newcommand{\ei}{\end{itemize}}
\newcommand{\cA}{{\cal A}}
\newcommand{\cl}{{\cal L}}
\newcommand{\cC}{{\cal C}}
\newcommand{\cD}{{\cal D}}
\newcommand{\bE}{{\mathbf{E}}}
\newcommand{\cI}{{\cal I}}
\newcommand{\cB}{{\cal B}}
\newcommand{\cS}{{\cal S}}
\newcommand{\bbR}{\mathbb{R}}
\newcommand{\bbC}{\mathbb{C}}
\newcommand{\eps}{{\epsilon}}
\newcommand{\non}{\nonumber}
\newcommand{\ds}{\displaystyle}
\newcommand{\mrd}{\mathrm{d}}
\newcommand{\mre}{\mathrm{e}}
\newcommand{\DefinedAs}[0]{\mathrel{\mathop:}=}
\DeclareMathOperator*{\argmin}{argmin}
\DeclareMathOperator*{\logdet}{log\,det}
\DeclareMathOperator*{\minimize}{minimize}
\DeclareMathOperator*{\maximize}{maximize}
\DeclareMathOperator*{\subject}{subject~to}
\DeclareMathOperator*{\rank}{rank}
\definecolor{bgblue}{rgb}{0.04,0.19,0.53}
\definecolor{dblue1}{rgb}{0,0.3,0.7}
\definecolor{dred}{rgb}{0.4,0.2,0}
\definecolor{grey}{rgb}{0.6,0.6,0.6}
\definecolor{lightgray}{rgb}{0.97,.99,0.99}
\newcommand{\cT}{\mathcal T}
\newcommand{\In}{\operatorname{In}}
\newtheorem{prop}{Proposition}
\newcommand{\bbH}{\mathbb{H}}
\begin{document}

\title{\LARGE \bf Low-complexity modeling of partially available second-order statistics: theory and an efficient matrix completion algorithm}

\author{Armin Zare, Yongxin Chen, Mihailo R.\ Jovanovi\'c, and Tryphon T. Georgiou
\thanks{Financial support from the National Science Foundation under Award CMMI 1363266, the Air Force Office of Scientific Research under Award FA9550-16-1-0009, the University of Minnesota Informatics Institute Transdisciplinary Faculty Fellowship, and the University of Minnesota Doctoral Dissertation Fellowship is gratefully acknowledged. 
}
\thanks{Armin Zare, Yongxin Chen, Mihailo R.\ Jovanovi\'c and Tryphon T.\ Georgiou are with the Department of Electrical and Computer Engineering, University of Minnesota, Minneapolis, MN 55455. E-mails: arminzare@umn.edu, chen2468@umn.edu, mihailo@umn.edu, tryphon@umn.edu.}
}


\maketitle
	\vspace*{-6ex}
	\begin{abstract}
State statistics of linear systems satisfy certain structural constraints that arise from the underlying dynamics and the directionality of input disturbances.
In the present paper we study the problem of completing partially known state statistics. Our aim is to develop tools that can be used in the context of control-oriented modeling of large-scale dynamical systems. For the type of applications we have in mind, the dynamical interaction between state variables is known while the directionality and dynamics of input excitation is often uncertain. Thus, the goal of the mathematical problem that we formulate is to identify the dynamics and directionality of input excitation in order to explain and complete observed sample statistics. More specifically, we seek to explain correlation data with the least number of possible input disturbance channels. We formulate this inverse problem as rank minimization, and for its solution, we employ a convex relaxation based on the nuclear norm. The resulting optimization problem is cast as a semidefinite program and can be solved using general-purpose solvers.  {For problem sizes that these solvers cannot handle, we develop a customized alternating minimization algorithm (AMA).}
We interpret AMA as a proximal gradient for the dual problem and prove sub-linear convergence for the algorithm with fixed step-size. We conclude with an example that illustrates the utility of our modeling and optimization framework  {and draw contrast between AMA and the commonly used alternating direction method of multipliers (ADMM) algorithm.}
	\end{abstract}
	\vspace*{-0.15cm}
 \begin{keywords}
Alternating minimization algorithm, convex optimization, disturbance dynamics, low-rank approximation, matrix completion problems, nuclear norm regularization, structured covariances.
	 \end{keywords}

	\vspace*{-3ex}
\section{Introduction}
\label{sec.intro}
Motivation for this work stems from control-oriented modeling of systems with a large number of degrees of freedom. Indeed, dynamics governing many physical systems are prohibitively complex for purposes of control design and optimization. Thus, it is common practice to investigate low-dimensional models that preserve the essential dynamics. To this end, stochastically driven linearized models often represent an effective option that is also capable of explaining observed statistics. Further, such models are well-suited for analysis and synthesis using tools from modern robust control.

An example that illustrates the point is the modeling of fluid flows. In this, the Navier-Stokes equations are prohibitively complex for control design~\cite{kimbew07}. On the other hand,
linearization of the equations around the mean-velocity profile in the presence of stochastic excitation has been shown to qualitatively replicate structural features of shear flows~\cite{farioa93,bamdah01,jovbamACC01,mj-phd04,jovbamjfm05,jovPOF08,moajovJFM10,liemoajovJFM10,moajovJFM12}. However, it has also been recognized that a simple white-in-time stochastic excitation cannot reproduce important statistics of the fluctuating velocity field~\cite{jovbamCDC01,jovgeoAPS10}. In this paper, we {introduce a mathematical framework to} consider {stochastically driven} linear models {that} depart from the white-in-time restriction on {random} disturbances. Our objective is to identify low-complexity disturbance models that account for partially available second-order statistics of large-scale dynamical systems. 

Thus, herein, we formulate a covariance completion problem for linear time-invariant (LTI) systems with uncertain disturbance dynamics. The complexity of the disturbance model is quantified by the number of input channels. We relate the number of input channels to the rank of a certain matrix which reflects the  directionality of input disturbances and the correlation structure of excitation sources. We address the resulting optimization problem using the nuclear norm as a surrogate for rank~\cite{fazhinboy01,faz02,canrec09,kesmonoh10,recfazpar10,canpla10,cantao10,chasanparwil11}.

The relaxed optimization problem is convex {and} can be cast as a semidefinite program (SDP) which is readily solvable by standard software for small-size problems. A further contribution is to specifically address  {larger problems that general-purpose solvers cannot handle.} To this end, we exploit the problem structure, {derive the Lagrange dual,} and develop an efficient customized Alternating Minimization Algorithm (AMA). {Specifically, we show that AMA is a proximal gradient for the dual and establish convergence for the covariance completion problem. We utilize a  {Barzilai-Borwein} (BB) step-size initialization followed by backtracking to achieve sufficient dual ascent. This enhances convergence relative to theoretically-proven sub-linear convergence rates for AMA with fixed step-size. We also draw contrast between AMA and the commonly  used Alternating Direction Method of Multipliers (ADMM) by showing that AMA leads to explicit, easily computable updates of both primal and dual optimization variables.}


The solution to the covariance completion problem gives rise to a class of linear filters that realize colored-in-time disturbances and account for the observed state statistics. This is a non-standard stochastic realization problem with partial spectral information \cite{kal82,geo83,geo87,byrlin94}. The class of modeling filters that we generate for the stochastic excitation is generically minimal in the sense that it has the same number of degrees of freedom as the original linear system.  {Furthermore, we demonstrate that the covariance completion problem can be also interpreted as an identification problem that aims to explain available statistics via suitable low-rank dynamical perturbations.}






Our presentation is organized as follows. We summarize key results regarding the structure of state covariances and its relation to the power spectrum of input processes in Section~\ref{sec.linearstochasticmodels}. We characterize admissible signatures for matrices that parametrize disturbance spectra and formulate the covariance completion problem in Section~\ref{sec.completion}. Section~\ref{sec.algorithms} develops an efficient optimization algorithm for solving this problem in large dimensions. To highlight the theoretical and algorithmic developments we provide an example in Section~\ref{sec.example}. We conclude with remarks and future directions in Section~\ref{sec.conclusion}.

 	 \vspace*{-2ex}
\section{Linear stochastic models and state statistics}
\label{sec.linearstochasticmodels}

We now discuss algebraic conditions that state covariances of LTI systems satisfy. For white-in-time stochastic inputs state statistics satisfy an algebraic Lyapunov equation. A similar algebraic characterization holds for LTI systems driven by colored stochastic processes~\cite{geo02a,geo02b}. This characterization provides the foundation for the covariance completion problem that we study in this paper.

Consider a linear time-invariant system
\be
	\label{eqn:State equation}
	\ba{rcl}
	\dot{x}
	& \!\! = \!\! &
	A \, x \; + \; B \, u
	\\[0.1cm]
	 {
	y }
	& \!\!  {=} \!\! & 
	 {C\, x}
	\ea
\ee
where $x(t)\in\bbC^n$ is a state vector,  {$y(t) \in \bbC^p$ is the output,} and $u(t)\in\mathbb{C}^m$ is a zero-mean stationary stochastic input.  {The dynamic matrix} $A\in\mathbb{C}^{n\times n}$ is Hurwitz, $B\in \mathbb{C}^{n\times m}$ is  {the input} matrix with $m \leq n$, and $(A,B)$ is a controllable pair. Let $X$ be the steady-state covariance of the state vector of system~\eqref{eqn:State equation}, $X=\lim_{t\to\infty}\bE \left( x(t)x^*(t) \right)$, with $\bE$ being the expectation operator. We next review key results and provide new insights into the following questions:
\bi
\item[(i)] What is the algebraic structure of $X$? In other words, given a positive definite matrix $X$, under what conditions does it qualify to be the steady-state covariance of~\eqref{eqn:State equation}?

	\vspace*{0.15cm}

\item[(ii)] Given the steady-state covariance $X$ of~\eqref{eqn:State equation}, what can be said about the power spectra of input processes that are consistent with these statistics?
\ei


	\vspace*{-2ex}
\subsection{Algebraic constraints on admissible covariances}
\label{sec.algebraic_constraints}

The steady-state covariance matrix $X$ of the state vector in~\eqref{eqn:State equation}
satisfies~\cite{geo02a,geo02b}
\begin{subequations}
	\be
	\label{eqn:rank Constraint on Sigma}
	\rank
	\left[
	\begin{matrix}
	AX \,+\, X A^* & B
	\\
	B^* & 0
	\end{matrix}
	\right]
	\, = \;
	\rank \left[\begin{matrix}
	0 & B\\B^* & 0
	\end{matrix}\right].
		\ee
An equivalent characterization is that there is a solution $H\in\mathbb{C}^{n \times m}$ to the equation
	\be
	\label{eqn:Constraint on Sigma}
	A \, X
	\; + \;
	X A^*
	\; = \;
	-B H^* \; - \; H B^*.
	\ee
	\end{subequations}
Either of these conditions, together with the positive definiteness of $X$, completely characterize state covariances of linear dynamical systems driven by white or colored stochastic processes~\cite{geo02a,geo02b}. When the input $u$ is white noise with covariance $W$, $X$ satisfies the algebraic Lyapunov equation
	\be
	\label{eqn:Contraint_whitenoise}
	A \, X \; + \; X A^*
	\; = \;
	-B \, WB^*.
	\non
	\ee
In this case, $H$ in~\eqref{eqn:Constraint on Sigma} is determined by $H= \frac{1}{2} BW$ and the right-hand-side $-B\,WB^*$ is sign-definite.
	In fact, except for this case when the input is white noise, {the matrix $Z$ defined by}
\begin{subequations}
	\label{eqn: Definition of Z}
	\begin{eqnarray}
	Z
	& \!\! \DefinedAs \!\! &
	-\left(AX \,+\, X A^*\right)
	\label{eqn:Constraint left}
	\\
	& \!\! = \!\! &
	BH^* \,+\, HB^*
	\label{eqn:Constraint right}
	\end{eqnarray}
\end{subequations}
may have both positive and negative eigenvalues. Additional discussion on the structure of $Z$ is provided in Section \ref{sec.signature}.

	\vspace*{-2ex}
\subsection{Power spectrum of input process}
\label{sec.filter}

\begin{figure}
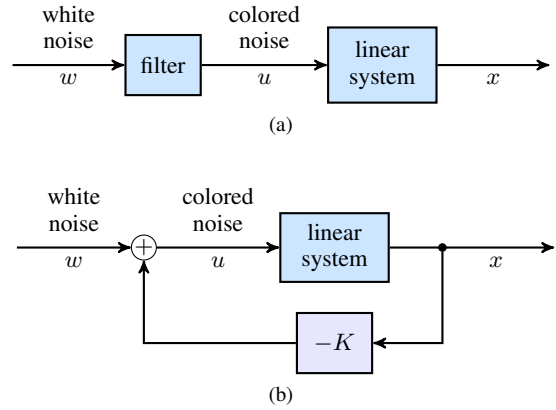

	\begin{center}
	\begin{tabular}{c}
		\subfloat[]{
%
%
%
%
%
%
\input{figures/Tikz_common_styles}
%
%
\noindent
\begin{tikzpicture}[scale=1, auto, >=stealth']
  
    \small

    
     \node[block, minimum height = .8cm, top color=RoyalBlue!20, bottom color=RoyalBlue!20] (sys1) {filter};
     
     \node[block, minimum height = 1cm, top color=RoyalBlue!20, bottom color=RoyalBlue!20] (sys2) at ($(sys1.east) + (2.4cm,0)$) {$\ba{c} \mbox{linear} \\ \mbox{system}\ea$};
     
     \node[] (output-node) at ($(sys2.east) + (1.6cm,0)$) {};
     
     \node[] (input-node) at ($(sys1.west) - (1.6cm,0)$) {}; 
      
%
%
%
%
%
%
%


    \draw [connector] (input-node) -- node [midway, above] {$\ba{c} \mbox{white} \\ \mbox{noise} \ea$} node [midway, below] {$w$} (sys1.west);
    \draw [connector] (sys2.east) -- node [midway, below] {$x$} (output-node);
    \draw [connector] (sys1.east) -- node [midway, above] {$\ba{c} \mbox{colored} \\ \mbox{noise} \ea$} node [midway, below] {$u$} (sys2.west);
\end{tikzpicture}
		         \label{fig.filter}
		         }
		        \\
		\subfloat[]{
%
%
%
%
%
%
\input{figures/Tikz_common_styles}
%
%
\noindent
\begin{tikzpicture}[scale=1, auto, >=stealth']
  
    \small

    
     \node[block, minimum height = .8cm, top color=RoyalBlue!20, bottom color=RoyalBlue!20] (sys1) {$\ba{c} \mbox{linear} \\ \mbox{system}\ea$};
     
     \node[block, minimum height = .8cm, top color=blue!10, bottom color=blue!10] (sys4) at ($(sys1.south) - (0cm,.8cm)$) {$-K$};
     
     \node[] (input-node) at ($(sys1.west) - (3.6cm,0)$) {}; 
     
     \node[] (output-node) at ($(sys1.east) + (2.3cm,0)$) {};
     
     \node[] (mid-node1) at ($(sys1.east) + (1cm,1cm)$) {}; 
          
     \node[sum] (esum1) at ($(sys1.west) - (1.8cm,0)$) {$+$};
           
     \node[branch] (R) at ($(sys1.east) + (.7cm,0.0cm)$){};
     
	

	
    \draw [connector] (input-node) -- node [midway, above] {$\ba{c} \mbox{white} \\ \mbox{noise} \ea$} node [midway, below] {$w$} (esum1.west);
                    	
    \draw [line] (sys1.east) -- (R);
    \draw [connector] (R.west) -- node [midway, below] {$x$} (output-node);
    
    \draw [connector] (R.south) |- (sys4.east);
    
    \draw [connector] (sys4.west) -| (esum1.south);
    
    \draw [connector] (esum1.east) -- node [midway, above] {$\ba{c} \mbox{colored} \\ \mbox{noise} \ea$} node [midway, below] {$u$} (sys1.west);
    
    
\end{tikzpicture}
		         \label{fig.sys-feedback}
		         }
	\end{tabular}
	\end{center}
	\caption{(a) A cascade connection of an LTI system with a linear filter that is designed to account for the sampled steady-state covariance matrix $X$; (b) An equivalent feedback representation of the cascade connection in (a).}
	\label{fig.filter_sys-feedback}
\end{figure}

For stochastically-driven {LTI} systems the state statistics can be obtained from knowledge of the system model and the input statistics. Herein, we are interested in the converse: starting from the steady-state covariance $X$ and the system dynamics \eqref{eqn:State equation}, we want to identify the power spectrum of the input process $u$. As illustrated in Fig.~\ref{fig.filter}, we seek to construct a filter which, when driven by white noise, produces a suitable stationary input  $u$ to~\eqref{eqn:State equation} so that the state covariance is $X$.
Next, we characterize a class of filters with degree at most $n$.

Consider the linear filter given by
\begin{subequations}
	\label{eq:filtermodel}
	\begin{eqnarray}
        \dot{\xi}
        &\!\!=\!\!&
        (A\,-\,BK) \, \xi \; + \; B \, w
        \\[0.cm]
        u
        &\!\!=\!\!&
        -K \, \xi \; + \; w
        \end{eqnarray}
where $w$ is a  {zero-mean} white stochastic process with covariance $\Omega \succ 0$ and
    \begin{equation}
        K \;=\; \frac{1}{2} \, \Omega \, B^*X^{-1} \; - \; H^*X^{-1},
        \label{eq.K}
    \end{equation}
    \end{subequations}
 {for some $H$ that satisfies~\eqref{eqn:Constraint on Sigma}.}
The power spectrum of $u$ is determined by
\be
	\Pi_{uu} (\omega) \;=\; \Psi(j\omega) \, \Omega \, \Psi^* (j\omega)
	\non
\ee
where
\be
    \Psi(s) \;=\; I \; - \; K \, (sI \,-\, A \,+\, BK)^{-1} B
    \non
\ee
is the transfer function of the filter \eqref{eq:filtermodel}.
To verify this, consider the cascade connection shown in Fig.~\ref{fig.filter}, with state space representation
\be
\label{eq.cascade}
	\ba{rcl}
            \left[\begin{array}{c}\dot{x}\\\dot{\xi}\end{array}\right]
            &\!\!=\!\!&
            \left[\begin{array}{cc}A & -BK \\ 0 & A \, - \, BK\end{array}\right]
            \left[\begin{array}{c}x \\ \xi\end{array}\right]
            \,+\,
            \left[\begin{array}{c}B \\ B\end{array}\right]w
            \\
            x
            &\!\!=\!\!&
            \left[\begin{array}{cc}I & 0\end{array}\right]
            \left[\begin{array}{c}x \\ \xi\end{array}\right].
	\ea
\ee
 {This representation has twice as many states as linear system~\eqref{eqn:State equation}, but it is not controllable and therefore not minimal. The coordinate transformation
\[
	\tbo{x}{\phi}
	\;=\;
	\tbt{\phantom{-}I}{0}{-I}{I}
	\tbo{x}{\xi}
\]
brings system~\eqref{eq.cascade} into the following form
\[
	\ba{rcl}
		\tbo{\dot{x}}{\dot{\phi}}
        	&\!\!=\!\!&
        	\tbt{A \,-\, BK}{-BK}{0}{A}\,
        	\tbo{x}{\phi}
		\,+\,
		\tbo{B}{0}
		w
		\\
		x
		&\!\!=\!\!&
		\obt{I}{0}
		\tbo{x}{\phi}.
	\ea
\]
Clearly, the input $w$ does not enter into the equation for $\phi$ and
\be
	\label{eq.compact}
	\dot{x}
	\;=\;
	(A \,-\, BK)\,x
	\,+\,
	B\,w
\ee
provides a minimal realization of the transfer function from white-in-time $w$ to $x$, $(sI-A+BK)^{-1}B$. In addition,} the corresponding algebraic Lyapunov equation in conjunction with~\eqref{eq.K} yields
\begin{align}
	\ba{l}
        \!\!(A\,-\, BK) X \,+\, X (A \,-\, BK)^* \,+\, B \, \Omega \, B^*
        \\[0.1cm]
        \hspace{1cm}
        \;=\;
        AX \,+\, XA^*\,+\, B \, \Omega \, B^* \,-\, BKX \,-\, XK^*B^*
        \\[0.1cm]
        \hspace{1cm}
        \;=\;
        AX \,+\, XA^*\,+\, BH^* \,+\, HB^*
        \\[0.1cm]
        \hspace{1cm}
        \;=\; 0.
        \ea
        \non
\end{align}
This shows that  \eqref{eq:filtermodel} generates a process $u$ that is consistent with $X$.

 {
As we elaborate next, compact representation~\eqref{eq.compact} offers an equivalent interpretation of colored-in-time stochastic input processes as a {\em dynamical perturbation\/} to system~\eqref{eqn:State equation}.
}




	\vspace*{-2ex}
\subsection{Stochastic control interpretation}
\label{sec.feedback_control}

The class of power spectra described by~\eqref{eq:filtermodel} is closely related to the covariance control problem, or the covariance assignment problem, studied in~\cite{HotSke87,CheGeoPav15b}. To illustrate this, let us consider
	\begin{subequations}
\be
	\label{eq:systemwithcontrol}
	\dot{x} \;=\; A \, x \; + \; B \, v \;+\; B \, w
\ee
where $w$ is again white with covariance $\Omega$; see Fig.~\ref{fig.sys-feedback}. In the absence of a control input ($v=0$), the steady-state covariance satisfies the Lyapunov equation
\be
        A \, X \; + \; XA^* \,+\, B \, \Omega \, B^* \;=\; 0.
        \non
\ee
A  choice of a non-zero process $v$ can be used to assign different values for $X$. 
Indeed, for
	\be
		v \; = \; -K \, x
		\label{eq.v}
	\ee
	\end{subequations}
and $A-BK$ Hurwitz, $X$ satisfies
\be
    \label{eq:feedbackcovariance}
        (A\,-\,BK) \, X \;+\; X \, (A\,-\, BK)^* \;+\; B \, \Omega \, B^* \;=\; 0.
\ee
%
It is easy to see that any $X\succ 0$ satisfying~\eqref{eq:feedbackcovariance} also satisfies~\eqref{eqn:Constraint on Sigma} with $H=-XK^*+ \frac{1}{2} B \, \Omega$. Conversely, if $X\succ 0$ satisfies~\eqref{eqn:Constraint on Sigma}, for $K = \frac{1}{2}\Omega \, B^*X^{-1} - H^*X^{-1}$, then $X$ also satisfies~\eqref{eq:feedbackcovariance} and $A-BK$ is Hurwitz.
Thus, the following statements are equivalent:
\bi
\item A matrix $X \succ 0$ qualifies as the stationary state covariance of~\eqref{eq:systemwithcontrol} via a suitable choice of state-feedback~\eqref{eq.v}.
	\vspace*{-.25cm}
\item A matrix $X \succ 0$ is a state covariance of~\eqref{eqn:State equation} for some stationary stochastic input $u$.
\ei

To clarify the connection between $K$ and the corresponding modeling filter for $u$, let
\begin{subequations}\label{eq:linearfeedback}
\be
\label{eq:linearoutput}
        u \;=\; -K \, x \,+\, w.
\ee
Substitution of~\eqref{eq.v} into~\eqref{eq:systemwithcontrol} yields
\begin{align}
	\label{eq.7b}
        \dot{x} 
        & ~ = ~
        ( A \,-\, BK ) \, x \;+\; B \, w 
        \\[0.cm]
        & ~ = ~
        A \, x \; + \; B \, u
        \non
\end{align}
\end{subequations}
which coincides with~\eqref{eqn:State equation}. Thus, $X$ can also be achieved by driving~\eqref{eqn:State equation} with $u$ given by~\eqref{eq:linearoutput}. The equivalence of~\eqref{eq:filtermodel} and~\eqref{eq:linearfeedback} is evident.
 {
Equation~\eqref{eq.7b} shows that a colored-in-time stochastic input process $u$ can be interpreted as a dynamical perturbation to system~\eqref{eqn:State equation}. This offers advantages from a computational standpoint, e.g., when conducting stochastic simulations; see Section~\ref{sec.example}.
}

In general, there is more than one choice of $K$ that yields a given feasible $X$. A criterion for the selection of an optimal feedback gain $K$,
can be to minimize 
   \[
        \lim_{t \, \rightarrow \, \infty}\bE \left( v^* (t) \, v(t) \right).
    \]
This optimality criterion relates to information theoretic notions of distance (Kullback-Leibler divergence) between corresponding models with and without control~\cite{dai1991stochastic, chen2014optimal, chegeopav14c}. Based on this criterion, the optimal feedback gain $K$ can be obtained by minimizing $\trace \, (K X K^*)$, subject to the linear constraint~\eqref{eq:feedbackcovariance}. This choice of $K$ characterizes an optimal filter of the form~\eqref{eq:linearfeedback}. {This filter is used in Section~\ref{sec.example} where we provide an illustrative example.}

	\vspace*{-2ex}
\section{Covariance completion and model complexity}
\label{sec.completion}

In Section \ref{sec.linearstochasticmodels}, we presented the structural constraints on the state covariance $X$ of an LTI system. We also proposed a method to construct a class of linear filters that generate the appropriate input process $u$ to account for the statistics in $X$. In many applications, the dynamical generator $A$ in~\eqref{eqn:State equation} is known. For example, in turbulent fluid flows the mean velocity can be obtained using numerical simulations of the Navier-Stokes equations and linearization around this equilibrium profile yields $A$ in~\eqref{eqn:State equation}. On the other hand, stochastic excitation often originates from disturbances that are difficult to model directly. To complicate matters, the state statistics may be only partially known {, i.e., only certain correlations between a limited number of states may be available. For example, such second-order statistics may reflect partial output correlations obtained in numerical simulations or experiments of the underlying physical system.}
Thus, we now introduce a framework for completing unknown elements of $X$ in a manner that is consistent with state-dynamics and, thereby, obtaining information about the spectral content and directionality of input disturbances to~\eqref{eqn:State equation}.


For colored-in-time {disturbance} $u$ that enters into the state equation {in {\em all directions},} through the identity matrix, condition~\eqref{eqn:rank Constraint on Sigma} is trivially satisfied. Indeed, any sample covariance $X$ can be generated by a linear model~\eqref{eqn:State equation} with $B = I$. {
In this case, the Lyapunov-like constraint~\eqref{eqn:Constraint on Sigma} simplifies to
\be
	\label{eq.lyap_B_identity}
	A \, X
	\; + \; 
	X \,A^*
	\;=\; 
	-H^*
	\; - \;
	H.
	\non
\ee
Clearly, this equation is satisfied with $H^* = -A \, X$.  With this choice of the cross-correlation matrix $H$, the dynamics represented by~\eqref{eq.7b} can be equivalently written as
\[
    \dot{x}
    \; = \;
    - \dfrac{1}{2}
    \,
    X^{-1}
    \, x
    \;+\; 
    w
\]
with a white disturbance $w$.
This demonstrates that {\em colored-in-time\/} forcing $u$ which {\em excites all degrees of freedom\/} can {\em completely overwrite the original dynamics}.
Thus, such an input disturbance} can trivially account for the observed statistics {{\em but}} provides no useful information about the underlying physics.


In our setting, the structure and size of the matrix $B$ in~\eqref{eqn:State equation} is not known {\em a priori\/}, which means that the direction of the input disturbances are not given. In most physical systems, disturbance can directly excite only a limited number of directions in the state space. For instance, in mechanical systems where inputs represent forces and states represent position and velocity, disturbances can only enter into the velocity equation. Hence, it is of interest to identify a disturbance model that involves a small number of input channels. This requirement can be formalized by restricting the input to enter into the state equation through a matrix $B \in \bbC^{n \times m}$ with $m < n$. Thus, our objective is to identify matrices $B$ and $H$ in~\eqref{eqn:Constraint on Sigma} to reproduce a partially known $X$ while striking an optimal balance with the complexity of the model; the complexity is reflected in the rank of $B$, i.e., the number of input channels. This notion of complexity is closely related to the signature of $Z$, which we discuss next.

	\vspace*{-2ex}
\subsection{The signature of $Z$}
\label{sec.signature}

As mentioned in Section \ref{sec.linearstochasticmodels}, the matrix $Z$ in~\eqref{eqn: Definition of Z} is not necessarily positive semidefinite. However, it is not arbitrary. We next examine admissible values of the {\em signature\/} on $Z$, i.e., the number of positive, negative, and zero eigenvalues. In particular, we show that the number of positive and negative eigenvalues of $Z$ impacts the number of input channels in the state equation~\eqref{eqn:State equation}.

There are two sets of constraints on $Z$ arising from~\eqref{eqn:Constraint left} and~\eqref{eqn:Constraint right}, respectively. The first one is a standard Lyapunov equation with Hurwitz $A$ and a given Hermitian $X\succ 0$. The second provides a link between the signature of $Z$ and the number of input channels in~\eqref{eqn:State equation}.

First, we study the constraint on the signature of $Z$ arising from~\eqref{eqn:Constraint left} which we repeat here,
 \begin{equation}
   \label{eqn:lyap}
   A \, X \; + \; X A^* \; = \; - Z.
   \end{equation}
The unique solution to this Lyapunov equation,
with Hurwitz $A$ and Hermitian $X$ and $Z$, is given by
   \begin{equation}
   \label{eqn:Lyapunov solution}
   X
   \; = \;
   \int_{0}^{\infty}
   \mre^{A t} \, Z \, \mre^{A^* t} \, \mrd t.
   \end{equation}
Lyapunov theory implies that if $Z$ is positive definite then $X$ is also positive definite. However, the converse is not true. Indeed, for a given $X \succ 0$, $Z$ obtained from~\eqref{eqn:lyap} is not necessarily positive definite. Clearly, $Z$ cannot be negative definite either, otherwise $X$ obtained from~\eqref{eqn:Lyapunov solution} would be negative semidefinite. We can thus conclude that~\eqref{eqn:lyap} does in fact introduce a constraint on the signature of $Z$. In what follows, the signature is defined as the triple
   \begin{equation}
   \non
   \In(Z) \;=\; \left(\pi(Z),\nu(Z),\delta(Z) \right)
   \end{equation}
where $\pi(Z), \nu(Z)$, and $\delta(Z)$ denote the number of positive, negative, and zero eigenvalues of $Z$, respectively.

Several authors have studied constraints on signatures of $A$, $X$, and $Z$ that are linked through a Lyapunov equation~\cite{tau61,ost62,dea95}. Typically, such studies focus on the relationship between the signature of $X$ and the eigenvalues of $A$ for a given $Z \succeq 0$. In contrast, \cite{sil07} considers the relationship between the signature of $Z$ and eigenvalues of $A$ for $X\succ 0$ and we make use of these results.

Let $\{\lambda_1, \ldots, \lambda_l\}$ denote the eigenvalues of $A$, $\mu_k$ denote the geometric multiplicity of $\lambda_k$, and
   \begin{equation}
   \label{eqn:geometric multiplicity}
   \mu(A)
   \; \DefinedAs \;
   \max_{1 \, \leq \, k \, \leq \, l} \, \mu_k.
   \non
   \end{equation}
The following result is a special case of~\cite[Theorem~2]{sil07}.

\begin{prop}
   \label{thm:proposition 1}
Let $A$ be Hurwitz and let $X$ be positive definite. For $Z=-(AX+X A^*)$,
   \begin{align}
   \pi(Z) & \, \ge \; \mu(A).
   \label{eqn:Constraint on positive}
   \end{align}
\end{prop}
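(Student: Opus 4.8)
The plan is to show that $Z$ must have at least $\mu(A)$ positive eigenvalues by exploiting the integral representation~\eqref{eqn:Lyapunov solution} together with a dimension-counting argument on a suitable invariant subspace of $A$. Suppose, for contradiction, that $\pi(Z) \le \mu(A) - 1$, so that the negative-semidefinite cone $\{v : v^* Z v \le 0\}$ contains a subspace $\cN$ of dimension at least $n - \pi(Z) \ge n - \mu(A) + 1$.

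First I would pick an eigenvalue $\lambda_k$ of $A$ whose geometric multiplicity equals $\mu(A)$, and let $\cV$ denote the (right) eigenspace $\Ker(A - \lambda_k I)$, which has dimension $\mu(A)$. Since $\dim \cV + \dim \cN \ge \mu(A) + n - \mu(A) + 1 = n + 1 > n$, the two subspaces intersect nontrivially: there is a nonzero $v$ with $Av = \lambda_k v$ and $v^* Z v \le 0$. The key computation is then to evaluate the quadratic form of~\eqref{eqn:Lyapunov solution} against this $v$: using $\mre^{A^* t} v = \mre^{\bar\lambda_k t} v$ — wait, one must be careful, the eigenvector relation is for $A$ acting on the right, so instead I would pair $v^* X v$ with $v$ directly via the Lyapunov equation rather than the integral. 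From~\eqref{eqn:lyap}, $v^* A X v + v^* X A^* v = -v^* Z v$. This does not immediately simplify because $A^* v \ne \bar\lambda_k v$ in general. The cleaner route is to work with the integral: $v^* X v = \int_0^\infty (\mre^{A^* t} v)^* Z (\mre^{A^* t} v)\,\mrd t$, and choose $v$ instead in the eigenspace of $A^*$, i.e., replace $\mu(A)$ by $\mu(A^*)$; but since $A$ and $A^*$ have the same geometric multiplicities for conjugate eigenvalues, $\mu(A^*) = \mu(A)$, so this is harmless. With $A^* v = \bar\lambda_k v$ we get $\mre^{A^* t} v = \mre^{\bar\lambda_k t} v$, hence
\[
  v^* X v \;=\; \int_0^\infty \mre^{2\,\re(\lambda_k) t}\, (v^* Z v)\,\mrd t.
\]
Since $A$ is Hurwitz, $\re(\lambda_k) < 0$, so $\int_0^\infty \mre^{2\,\re(\lambda_k)t}\,\mrd t = -1/(2\,\re(\lambda_k)) > 0$, giving $v^* X v = \big(-1/(2\,\re(\lambda_k))\big)\, v^* Z v \le 0$. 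This contradicts $X \succ 0$ and $v \ne 0$, completing the argument.

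The main obstacle — and the step that needs the most care — is the subspace-intersection bookkeeping: ensuring the correct count, namely that the null-form subspace $\cN = \{v : v^*Zv \le 0\}$ can be taken of dimension $n - \pi(Z)$ (this is the standard fact that a Hermitian matrix with $\pi$ positive eigenvalues is negative semidefinite on some subspace of codimension exactly $\pi$, obtained from the spectral decomposition), and then invoking $\dim(\cV \cap \cN) \ge \dim\cV + \dim\cN - n$. One should also double-check the harmless swap between eigenspaces of $A$ and $A^*$, using that complex conjugation is a bijection on the spectrum preserving geometric multiplicity, so $\mu(A^*) = \mu(A)$. Everything else is routine. Note this recovers exactly the claimed bound $\pi(Z) \ge \mu(A)$, and the argument makes transparent why only the \emph{largest} geometric multiplicity enters, and why Hurwitzness of $A$ (not merely stability of the Lyapunov pair) is essential.
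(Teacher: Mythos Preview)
Your argument is correct. The dimension-counting step works once you take $\cN$ to be the span of the eigenvectors of $Z$ with nonpositive eigenvalue (a genuine subspace of dimension $n-\pi(Z)$, not the cone $\{v:v^*Zv\le 0\}$ as you first wrote), and the swap from $A$ to $A^*$ is handled correctly via $\dim\Ker(A-\lambda I)=\dim\Ker(A^*-\bar\lambda I)$. The integral computation then gives $v^*Xv\le 0$ for a nonzero $v$, contradicting $X\succ 0$.

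The paper does not actually prove this proposition; it cites it as a special case of a result of Silva and Sim\~oes and then offers a heuristic explanation via controllability: writing $Z=Z_+-Z_-$ with $Z_\pm\succeq 0$ and $\rank(Z_+)=\pi(Z)$, one has $X=X_+-X_-$ where $X_\pm$ solve the corresponding Lyapunov equations; $X_+$ is the controllability Gramian of $(A,B_+)$ with $Z_+=B_+B_+^*$, and since $\mu(A)$ is the minimum number of inputs for controllability, $\pi(Z)<\mu(A)$ forces $X_+$ singular, hence $X=X_+-X_-$ cannot be positive definite. Your route is more elementary and self-contained --- it needs only the integral formula and linear-algebraic dimension counting, not the controllability characterization of $\mu(A)$ --- whereas the paper's explanation ties the bound directly to the system-theoretic meaning of $\mu(A)$, which is conceptually illuminating for the application at hand but relies on a heavier background fact.
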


{
To explain the nature of the constraint $\pi(Z)\ge \mu(A)$,
we first note that $\mu(A)$ is {\em the least number of input channels that are needed for system \eqref{eqn:State equation} to be controllable\/}~\cite[p.\ 188]{che95}.} Now consider the decomposition
   \[
   Z
   \; = \;
   Z_+ \; - \; Z_-
   \]
where $Z_+$, $Z_-$ are positive semidefinite matrices, and accordingly $X=X_+-X_-$ with $X_+$, $X_-$ denoting the solutions of the corresponding Lyapunov equations. Clearly, unless the above constraint \eqref{eqn:Constraint on positive} holds, $X_+$ cannot be positive definite. Hence, $X$ cannot be positive definite either.
Interestingly, there is no constraint on $\nu(Z)$ other than
\[
\pi(Z) \; + \; \nu(Z) \; \leq \; n
\]
which comes from the dimension of $Z$.

To study the constraint on the signature of $Z$ arising from \eqref{eqn:Constraint right}, we begin with a lemma, whose proof is provided in the appendix.
   \begin{lemma}
   \label{thm:lemma 0}
For a Hermitian matrix $Z$ decomposed as
   \begin{align*}
   Z \; = \; S \; + \; S^*
   \end{align*}
the following holds
   \begin{align*}
   \pi(Z) \; \leq \; \rank(S).
   \end{align*}
   \end{lemma}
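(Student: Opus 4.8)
The plan is to bound $\pi(Z)$ by relating the quadratic form $x^*Zx$ to the rank of $S$. The key observation is that $Z = S + S^*$ means $x^*Zx = 2\,\re(x^*Sx)$ for every vector $x$, so in particular $x^*Zx = 0$ whenever $Sx = 0$. This immediately suggests restricting attention to the kernel of $S$. Let $r = \rank(S)$, so that $\dim \Ker(S) = n - r$. For any $x \in \Ker(S)$ we have $x^*Zx = x^*Sx + x^*S^*x = 0$, hence $Z$ is identically zero as a quadratic form on the subspace $\Ker(S)$, i.e.\ $\Ker(S)$ is an isotropic (totally singular) subspace for the Hermitian form associated with $Z$.

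Next I would invoke the standard fact from the theory of Hermitian forms (a consequence of Sylvester's law of inertia) that the dimension of any isotropic subspace of a Hermitian form with inertia $(\pi, \nu, \delta)$ is at most $\min\{\pi,\nu\} + \delta = n - \max\{\pi,\nu\}$. Applying this with the isotropic subspace $\Ker(S)$ gives
\begin{align*}
   n \; - \; r \;=\; \dim \Ker(S) \;\leq\; n \; - \; \max\{\pi(Z),\nu(Z)\} \;\leq\; n \; - \; \pi(Z),
\end{align*}
which rearranges to $\pi(Z) \leq r = \rank(S)$, as claimed.

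The one point that needs care — and the only real obstacle — is justifying the bound on the dimension of an isotropic subspace. A clean self-contained argument: let $V_+$ be a maximal subspace on which $x^*Zx > 0$ for $x \neq 0$, so $\dim V_+ = \pi(Z)$. If $x \in \Ker(S) \cap V_+$ with $x \neq 0$, then on one hand $x^*Zx = 0$ (since $x \in \Ker(S)$) and on the other hand $x^*Zx > 0$ (since $x \in V_+$), a contradiction; hence $\Ker(S) \cap V_+ = \{0\}$. Therefore $\dim \Ker(S) + \dim V_+ \leq n$, i.e.\ $(n-r) + \pi(Z) \leq n$, giving $\pi(Z) \leq r$ directly. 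This avoids appealing to the zero eigenspace and the full inertia decomposition and is the route I would write up.
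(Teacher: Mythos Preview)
Your proof is correct, and it takes a genuinely different route from the paper's own argument. The paper first reduces $Z$ by congruence to the canonical diagonal form $2\,\diag(I_\pi,-I_\nu,0)$, then writes any admissible $S$ as $\tfrac{1}{2}Z+N$ with $N$ skew-Hermitian; partitioning conformally, the $(1,1)$ block of $S$ is $I_\pi+N_{11}$, and since $N_{11}$ is skew-Hermitian its eigenvalues are purely imaginary, forcing $I_\pi+N_{11}$ to be invertible and hence $\rank(S)\ge\pi(Z)$. Your argument instead observes that $\Ker(S)$ is isotropic for the Hermitian form $x^*Zx$ and therefore meets any maximal positive-definite subspace trivially, yielding $(n-r)+\pi(Z)\le n$ directly. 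Your approach is more elementary---no canonical form, no spectral argument about skew-Hermitian blocks---and also delivers the companion bound $\nu(Z)\le\rank(S)$ with no extra work by replacing $V_+$ with $V_-$. The paper's block argument, on the other hand, dovetails naturally with the explicit construction in the subsequent proposition, where the same canonical form is reused to exhibit an $S$ attaining $\rank(S)=\max\{\pi(Z),\nu(Z)\}$.
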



Clearly, the same bound applies to $\nu(Z)$, that is,
   \begin{align*}
   \nu(Z) \; \leq \; \rank(S).
   \end{align*}

The importance of these bounds stems from our interest in decomposing $Z$ into summands of small rank. A decomposition of $Z$ into
$S+S^*$ allows us to identify input channels and power spectra by factoring
$S=BH^*$. The rank of $S$ coincides with the rank of $B$, that is, with the number of input channels in the state equation. Thus, it is of interest to determine the minimum rank of $S$ in such a decomposition and this is given in {Proposition~\ref{thm:lemma 1} (the proof is provided in the appendix).}

   \begin{prop}
   \label{thm:lemma 1}
For a Hermitian matrix $Z$ having signature $(\pi(Z),\nu(Z),\delta(Z))$,
   \begin{equation}
   \label{eqn:minrank}
   \min
   \left\{ \rank(S) | ~ Z \, = \, S \, + \, S^* \right\}
   \, = \,
   \max \left\{ \pi(Z),\nu(Z) \right\}.
   \non
   \end{equation}
   \end{prop}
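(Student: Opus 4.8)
The plan is to prove the two inequalities separately. For the lower bound, I would invoke Lemma~\ref{thm:lemma 0}: any decomposition $Z = S + S^*$ forces $\pi(Z) \le \rank(S)$, and by the remark immediately following that lemma, $\nu(Z) \le \rank(S)$ as well. Hence $\rank(S) \ge \max\{\pi(Z),\nu(Z)\}$ for every admissible $S$, which gives $\min\{\rank(S) \mid Z = S + S^*\} \ge \max\{\pi(Z),\nu(Z)\}$.

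For the matching upper bound, I would construct an explicit $S$ of rank exactly $\max\{\pi(Z),\nu(Z)\}$. Write the spectral decomposition $Z = U \Lambda U^*$ with $U$ unitary, and split the eigenvalues into the positive ones, collected in a diagonal block $\Lambda_+ \succ 0$ of size $\pi(Z)$, the negative ones in $\Lambda_- \prec 0$ of size $\nu(Z)$, and the zero block. By unitary invariance it suffices to treat $Z = \diag(\Lambda_+, \Lambda_-, 0)$. Without loss of generality assume $\pi(Z) \ge \nu(Z)$ (otherwise replace $Z$ by $-Z$, which swaps $\pi$ and $\nu$ and does not change the minimal rank since $-Z = (-S) + (-S)^*$). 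The idea is to pair each negative eigenvalue with a positive one using a $2\times 2$ block of the form $\left[\begin{smallmatrix} a & b \\ c & d \end{smallmatrix}\right]$ whose symmetric part $\left[\begin{smallmatrix} 2a & b+c \\ b+c & 2d \end{smallmatrix}\right]$ realizes the prescribed indefinite pair $\diag(\lambda_+,\lambda_-)$ while the block itself has rank one: e.g. take the rank-one matrix $\left[\begin{smallmatrix} \alpha \\ \beta \end{smallmatrix}\right]\left[\begin{smallmatrix} \alpha & \gamma \end{smallmatrix}\right]$ and solve $2\alpha^2 = \lambda_+$, $\alpha\gamma + \beta\alpha = 0$... more cleanly, for scalars one checks that $S_0 = \left[\begin{smallmatrix} \lambda_+/2 & t \\ -t + \lambda_-/2 \cdot 0 & \ldots \end{smallmatrix}\right]$ — the cleanest route is: for a $2\times2$ indefinite diagonal $\diag(p,-q)$ with $p,q>0$, the matrix $S_0 = \frac{1}{2}\left[\begin{smallmatrix} p & p+q \\ -(p+q) & -q \end{smallmatrix}\right]$ has $S_0 + S_0^* = \diag(p,-q)$ and $\det S_0 = \frac{1}{4}(-pq + (p+q)^2) > 0$, so $\rank S_0 = 1$. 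Doing this for each of the $\nu(Z)$ negative eigenvalues (paired with distinct positive eigenvalues) and sending the remaining $\pi(Z) - \nu(Z)$ positive eigenvalues $\lambda$ to $1\times1$ blocks $\lambda/2$, and all zero eigenvalues to $0$, yields an $S$ with $\rank(S) = \nu(Z) + (\pi(Z) - \nu(Z)) = \pi(Z) = \max\{\pi(Z),\nu(Z)\}$ and $S + S^* = Z$.

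Combining the two bounds gives equality. I would write this up by first stating the lower bound in one sentence (citing Lemma~\ref{thm:lemma 0} and its corollary), then presenting the block construction for the upper bound, reducing to diagonal $Z$ by unitary conjugation and to the case $\pi(Z)\ge\nu(Z)$ by the $Z \mapsto -Z$ symmetry.

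The main obstacle is getting the explicit rank-one $2\times2$ building block right and verifying both that its Hermitian part is the desired indefinite diagonal and that its rank is one (equivalently, that its determinant is nonzero) — and handling the complex-Hermitian case, where the off-diagonal entries must be chosen so that $S_0 + S_0^*$ is still a real indefinite diagonal; taking the off-diagonal entries of $S_0$ to be purely imaginary conjugates, $S_0 = \frac{1}{2}\left[\begin{smallmatrix} p & \mri(p+q) \\ \mri(p+q) & -q \end{smallmatrix}\right]$ with $S_0^* = \frac{1}{2}\left[\begin{smallmatrix} p & -\mri(p+q) \\ -\mri(p+q) & -q \end{smallmatrix}\right]$, gives $S_0 + S_0^* = \diag(p,-q)$ and $\det S_0 = \frac{1}{4}(-pq + (p+q)^2) \ne 0$, so this works uniformly over $\bbR$ and $\bbC$. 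Everything else is bookkeeping about assembling these blocks into a block-diagonal $S$ and undoing the unitary change of basis.
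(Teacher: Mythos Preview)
Your lower-bound argument is correct and identical to the paper's. The overall strategy for the upper bound --- reduce $Z$ to diagonal form, pair each negative eigenvalue with a positive one inside a $2\times 2$ block contributing rank one to $S$, and leave the surplus positive eigenvalues as $1\times 1$ blocks --- is also sound. But the explicit $2\times 2$ block you wrote down does not work.

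You claim that
\[
S_0 \;=\; \tfrac{1}{2}\begin{pmatrix} p & p+q \\ -(p+q) & -q \end{pmatrix}
\]
has rank one because $\det S_0 = \tfrac{1}{4}\bigl(-pq + (p+q)^2\bigr) > 0$. A nonzero determinant means $\rank S_0 = 2$, not $1$; your complex variant with off-diagonal $\mri(p+q)$ has the same determinant and the same defect. To get rank one you need $\det S_0 = 0$, i.e.\ the product of the off-diagonal entries must equal $-pq/4$. With the skew constraint $c = -\bar{b}$ this forces $|b|^2 = pq/4$, so the correct off-diagonal magnitude is $\sqrt{pq}$, not $p+q$. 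The block
\[
S_0 \;=\; \tfrac{1}{2}\begin{pmatrix} p & \sqrt{pq} \\ -\sqrt{pq} & -q \end{pmatrix}
\]
does satisfy $S_0 + S_0^* = \diag(p,-q)$ with $\det S_0 = 0$, and with this fix your construction goes through.

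The paper sidesteps this arithmetic entirely by using a \emph{congruence} transformation $Z \mapsto TZT^*$ (rather than a unitary one) to bring $Z$ to the normalized form $2\,\diag(I_\pi,-I_\nu,0)$; since $\rank(TST^*) = \rank(S)$ for invertible $T$, this is legitimate, and with all eigenvalues equal to $\pm 2$ the rank-one block is simply $\left(\begin{smallmatrix}1 & -1\\ 1 & -1\end{smallmatrix}\right)$. Your unitary reduction keeps the eigenvalues general and so requires the $\sqrt{pq}$ computation above. Either route is fine once the block is corrected.
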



We can now summarize the bounds on the number of positive and negative eigenvalues of the matrix $Z$ defined by~\eqref{eqn: Definition of Z}. By combining Proposition~\ref{thm:proposition 1} with Lemma~\ref{thm:lemma 0} we show that these upper bounds are dictated by the number of inputs in the state equation~\eqref{eqn:State equation}.

\begin{prop}\label{thm:theorem 1}
Let $X \succ 0$ denote the steady-state covariance of the state $x$ of a stable linear system \eqref{eqn:State equation} with $m$ inputs. If $Z$ satisfies the Lyapunov equation \eqref{eqn:lyap}, then
   \begin{subequations}
   \begin{align}
   0 & \, \le \; \nu(Z) \; \le \; m
   \non
   \\[0.1cm]
   \mu(A)  & \, \le \; \pi(Z) \; \le \; m.
   \non
   \end{align}
   \end{subequations}
\end{prop}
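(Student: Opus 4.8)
The plan is to obtain all four inequalities as immediate corollaries of results already in place, so the argument is short. Two of the bounds ($\nu(Z)\ge 0$ and $\mu(A)\le\pi(Z)$) are essentially free, and the remaining two upper bounds both follow from a single rank-$\le m$ decomposition of $Z$ fed into Lemma~\ref{thm:lemma 0}.

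First I would dispatch the easy parts. The inequality $\nu(Z)\ge 0$ is trivial, since $\nu(Z)$ simply counts the negative eigenvalues of the Hermitian matrix $Z=-(AX+XA^*)$. For the lower bound $\mu(A)\le\pi(Z)$, I would quote Proposition~\ref{thm:proposition 1} verbatim: its hypotheses---$A$ Hurwitz, $X\succ 0$, and $Z=-(AX+XA^*)$---are exactly what we have here.

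For the two upper bounds $\nu(Z)\le m$ and $\pi(Z)\le m$, the key step is to exhibit a decomposition $Z=S+S^*$ with $\rank(S)\le m$. Since $X$ is, by assumption, the steady-state covariance of \eqref{eqn:State equation} with input matrix $B\in\mathbb{C}^{n\times m}$, the algebraic characterization recalled in Section~\ref{sec.algebraic_constraints} (the equivalence between \eqref{eqn:rank Constraint on Sigma} and \eqref{eqn:Constraint on Sigma}) guarantees a solution $H\in\mathbb{C}^{n\times m}$ to \eqref{eqn:Constraint on Sigma}, i.e. $AX+XA^*=-BH^*-HB^*$. Hence $Z=BH^*+HB^*$, and setting $S\DefinedAs BH^*$ gives $Z=S+S^*$ with $\rank(S)\le\rank(B)\le m$. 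Applying Lemma~\ref{thm:lemma 0} yields $\pi(Z)\le\rank(S)\le m$, and the companion bound stated immediately after that lemma gives $\nu(Z)\le\rank(S)\le m$. Combining these with the two observations above completes the proof.

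I do not expect any real obstacle. The only point that warrants an explicit sentence is the passage from ``$X$ is a genuine state covariance of a system with $m$ inputs'' to ``there exists $H$ with exactly $m$ columns satisfying \eqref{eqn:Constraint on Sigma}''; this is precisely the content of the covariance-characterization results recalled in Section~\ref{sec.algebraic_constraints}. Everything else is a direct invocation of Proposition~\ref{thm:proposition 1} and Lemma~\ref{thm:lemma 0}, and the bound $\pi(Z)+\nu(Z)\le n$ noted earlier is subsumed by the sharper estimates proved here.
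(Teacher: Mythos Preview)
Your proposal is correct and follows essentially the same route as the paper's own proof: both invoke Proposition~\ref{thm:proposition 1} for the lower bound on $\pi(Z)$, set $S=BH^*$ using the characterization~\eqref{eqn:Constraint on Sigma}, and then apply Lemma~\ref{thm:lemma 0} (and its companion bound for $\nu$) together with $\rank(S)\le\rank(B)\le m$ to obtain the upper bounds.
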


   \begin{proof}
From Section \ref{sec.linearstochasticmodels}, a state covariance $X$ satisfies
   \begin{equation}
   A \, X
   \; + \;
   X A^*
   \; = \;
   - BH^* \; - \; HB^*.
   \non
   \end{equation}
Setting $S=BH^*$,
   \begin{equation}
   Z
   \; = \;
   BH^* \; + \; HB^*
   \; = \;
   S \; + \; S^*.
   \non
   \end{equation}
From Lemma \ref{thm:lemma 0},
   \begin{equation}
   \max\{\pi(Z),\nu(Z)\}
   \; \leq \; \rank(S)
   \; \leq \;
   \rank(B)
   \; = \;
   m.
   \non
   \end{equation}
The lower bounds follow from Proposition \ref{thm:proposition 1}.
   \end{proof}

	\vspace*{-2ex}
\subsection{Decomposition of $Z$ into $B H^* + H B^*$}
\label{sec:fac}

Proposition~\ref{thm:lemma 1} expresses the possibility to decompose the matrix $Z$ into $BH^*+HB^*$ with $S=BH^*$ of minimum rank equal to $\max \left\{ \pi(Z),\nu(Z) \right\}$. Here, we present an algorithm that achieves this objective. Given $Z$ with signature $(\pi(Z),\nu(Z),\delta(Z))$, we can choose an invertible matrix $T$ to bring $Z$  into the following form\footnote{{The choice of $T$ represents a standard congruence transformation that brings $Z$ into canonical form.}}
   \begin{equation}
   \label{eq:FormZ}
   \hat{Z}
   \; \DefinedAs \;
   T\,Z\,T^*
   \; = \;
         2 \left[
         \begin{array}{ccc}
         I_{\pi} & 0 & 0\\
         0 & -I_{\nu} & 0\\
         0 & 0 & 0
         \end{array}
         \right]
   \end{equation}
where $I_{\pi}$ and $I_{\nu}$ are identity matrices of dimension $\pi(Z)$ and $\nu(Z)$~\cite[pages 218--223]{horjoh90}.
We first present a factorization of $Z$ for $\pi(Z) \leq \nu(Z)$. With
   \begin{equation}
       \hat{S}
   \; = \,
   \left[
   \begin{array}{cccc}
         I_{\pi} & -I_{\pi} & 0 & 0\\
         I_{\pi} & -I_{\pi} & 0 & 0\\
         0 & 0 & -I_{\nu - \pi} & 0\\
         0 & 0 & 0 & 0
   \end{array}
   \right]
   \label{eq:FormS}
   \end{equation}
we clearly have $\hat{Z} = \hat{S} + \hat{S}^*$. Furthermore, $\hat{S}$ can be written as $\hat{S} = \hat{B} \hat{H}^*$, where
   \begin{equation}
   \nonumber
   \hat{B}
   \; = \,
   \left[
   \begin{array}{cc}
   I_{\pi} & 0\\
   I_{\pi} & 0\\
   0 & I_{\nu - \pi}\\
   0 & 0
   \end{array}
   \right],
   ~~
   \hat{H}
   \; = \,
   \left[
   \begin{array}{cc}
   I_{\pi} & 0\\
   -I_{\pi} & 0\\
   0 & -I_{\nu - \pi}\\
   0 & 0
   \end{array}
   \right].
   \end{equation}
   In case $\nu(Z)=\pi(Z)$, $I_{\nu-\pi}$ and the corresponding row and column are empty.
Finally, the matrices $B$ and $H$ are determined by $B=T^{-1}\hat{B}$ and $H=T^{-1}\hat{H}$.

Similarly, for $\pi(Z) > \nu(Z)$, $Z$ can be decomposed into $BH^*+HB^*$ with $B=T^{-1}\hat{B}$,  $H=T^{-1}\hat{H}$, and
   \begin{equation}
   \non
   \hat{B}
   \; = \,
   \left[
   \begin{array}{cc}
   I_{\pi-\nu} & 0\\
   0 & I_\nu\\
   0 & I_\nu\\
   0 & 0
   \end{array}
   \right],
   ~~
   \hat{H}
   \; = \,
   \left[
   \begin{array}{cc}
   I_{\pi-\nu} & 0\\
   0 & I_\nu\\
   0 & -I_\nu\\
   0 & 0
   \end{array}
   \right].
   \end{equation}
Note that both $B$ and $H$ are full column-rank matrices.

	\vspace*{-2ex}
\subsection{Covariance completion problem}
\label{sec.ccp}

Given the dynamical generator $A$ and partially observed state correlations, we want to obtain a low-complexity model for the disturbance that can explain the observed entries of $X$. Here the complexity is reflected by the number of input channels, i.e., the rank of the input matrix $B$. {Clearly, $\rank(B) \geq \rank(S)$. Furthermore, any $S$ can be factored as $S = B H^*$ with $\rank(B) = \rank(S)$ via, e.g., singular value decomposition. Thus, we focus on minimizing the rank of $S$.
}

{Rank minimization is a difficult problem because ${\rm rank}(\cdot)$ is a non-convex function.} Recent advances have demonstrated that the minimization of the nuclear-norm (i.e., the sum of the singular values)
    \[
        \| S \|_*
        \; \DefinedAs \;
        \sum_{i \, = \, 1}^n \sigma_i (S)
    \]
represents a good proxy for rank minimization~\cite{fazhinboy01,faz02,canrec09,kesmonoh10,recfazpar10,canpla10,cantao10,chasanparwil11}. We thus formulate the following  matrix completion problem:
	\bi
\item[] {\em Given a Hurwitz $A$ and the matrix $G$, determine matrices $X = X^*$ and $Z = S + S^*$ from the solution to}
\begin{align}
	\ba{cl}
	\minimize\limits_{S, \, X}
	&
	\| S \|_*
	\\[.15cm]
	\subject &
	A X \,+\, X A^* \,+\, S \,+\, S^* \; = \; 0
	\\[.1cm]
	&
	(C \, X \, C^*) \circ E \,-\, G \; = \; 0
	\\[.1cm]
	& X \,\succeq\, 0.
	 \ea
	\label{LR}
\end{align}
	\ei
	
In the above, the matrices $A$, $C$, $E$, and $G$ represent problem data, while $S$, $X \in \bbC^{n\times n}$ are optimization variables. The entries of  {the Hermitian matrix} $G$ represent partially known second-order statistics which reflect output correlations provided by numerical simulations or experiments of the underlying physical system. The symbol $\circ$ denotes elementwise matrix multiplication and the matrix $E$ is the structural identity defined by
	\be
	E_{ij} \,=\,
	\left\{
	\ba{ll}
	1,
	&~
	G_{ij} ~ \text{is available}
	\\[.1cm]
	0,
	&~
	G_{ij}~ \text{is unavailable.}
	\ea
	\right.
	\non
\ee

The constraint set in~(\ref{LR}) represents the intersection of the positive semidefinite cone and two linear subspaces. These are specified by the Lyapunov-like constraint, which is imposed by the linear dynamics, and the linear constraint which relates $X$ to the available entries of the steady-state output covariance matrix
\be
    \displaystyle{\lim_{t \, \rightarrow \, \infty}}
    \bE
    \left( y (t)  \, y^*(t) \right)
    \; = \;
    C\, X \, C^*.
    \non
\ee

	
As shown in Proposition~\ref{thm:lemma 1}, {\em minimizing the rank of $S$ is equivalent to minimizing\/} $\max \, \{\pi(Z),\nu(Z)\}$. Given $Z$, there exist matrices $Z_+\succeq 0$ and $Z_-\succeq 0$ with $Z=Z_+-Z_-$ such that $\rank(Z_+)=\pi(Z)$ and $\rank(Z_-)=\nu(Z)$. Furthemore, any such decomposition of $Z$ satisfies $\rank(Z_+)\geq \pi(Z)$ and $\rank(Z_-)\geq \nu(Z)$. Thus, instead of~\eqref{LR}, we can alternatively consider the following convex optimization problem, which aims at minimizing $\max \left\{\pi(Z),\nu(Z) \right\}$,
\begin{align}
	\ba{cl}
	\minimize\limits_{X,\, Z_+,\, Z_-}
	&
	\max \left\{ \trace(Z_+), \, \trace(Z_-) \right\}
	\\[.15cm]
	\subject &
	A \, X \,+\, X A^* \,+\, Z_+ \,-\, Z_- \; = \; 0
	\\[.1cm]
	&
	(C \, X \, C^*) \circ E \,-\, G \; = \; 0
	\\[.1cm]
	& X \,\succeq\, 0,~ Z_+ \,\succeq\, 0,~ Z_- \,\succeq\, 0.
	 \ea
	\label{LR1}
\end{align}
Both~\eqref{LR} and~\eqref{LR1} can be solved efficiently using standard SDP solvers~\cite{cvx,boyvan04} for small- and medium-size problems. 
{Note that~\eqref{LR} and~\eqref{LR1} are obtained by relaxing the rank function to the nuclear norm and the signature to the trace, respectively. Thus, even though the original non-convex optimization problems are equivalent to each other, the resulting convex relaxations~\eqref{LR} and~\eqref{LR1} are not, in general.}

In Section \ref{sec.algorithms}, we develop an efficient customized algorithm which solves the following {\em covariance completion\/} problem
\begin{align}
	\ba{cl}
	\minimize\limits_{X,\, Z}
	&
	-\logdet X \,+\, \gamma\,\norm{Z}_*
	\\[.15cm]
	\subject
	&
	~A \, X \,+\, X A^* \,+\, Z  \,=\, 0
	\\[.15cm]
	&
	\,\,\left(C X C^* \right)\circ E \,-\, G \,=\, 0.
	 \ea
	 \tag{CC}
	\label{eq.CCP}
\end{align}
 {For any $Z$ there exists a decomposition $Z = Z_+ - Z_-$ with $Z_+$, $Z_-\succeq 0$ such that
\[
        \norm{Z}_*
        \;=\;
        \trace(Z_+) \,+\, \trace(Z_-).
\]
Since
\[  
	 \trace(Z_+) \,+\, \trace(Z_-)   
        \; \ge \;
        \max \{ \trace(Z_+), \, \trace(Z_-) \},
\]
the solution to~\eqref{eq.CCP} provides a possibly suboptimal solution to~\eqref{LR1}.}
In recent work~\cite{linjovgeoCDC13, zarjovgeoACC14}, we considered \eqref{eq.CCP}
in the absence of the logarithmic barrier function. However, in that work, the corresponding semidefinite $X$ is not suitable for synthesizing the input filter~\eqref{eq:filtermodel} because $X^{-1}$ appears in the expression for $K$; cf.~\eqref{eq.K}. Furthermore, as we show in Section~\ref{sec.algorithms}, another benefit of using the logarithmic barrier is that it ensures strong convexity of the smooth part of the objective function in~\eqref{eq.CCP} which is exploited in our customized algorithm.

	\vspace*{-2ex}
\section{Customized algorithm for solving the covariance completion problem}
\label{sec.algorithms}

We begin this section by bringing~\eqref{eq.CCP} into a form  {that} is convenient for alternating direction methods. We then study the optimality conditions, formulate the dual problem, and develop a customized Alternating Minimization Algorithm (AMA) for~\eqref{eq.CCP}. Our customized algorithm allows us to exploit the respective structure of the logarithmic barrier function and the nuclear norm, thereby leading to an efficient implementation that is well-suited for large systems.

We note that AMA was originally developed by Tseng~\cite{tse91} and its enhanced variants have been recently presented in~\cite{golodosetbar14, dalraj14} and used, in particular, for estimation of sparse Gaussian graphical models. 
{In Section~\ref{sec.AMApg}, we show that AMA can be equivalently interpreted as a proximal gradient algorithm on the dual problem. This allows us to establish theoretical results regarding the convergence of AMA when applied to the optimization problem~\eqref{eq.CCP}. It also enables a principled step-size selection aimed at achieving sufficient dual ascent.
}

In~\eqref{eq.CCP}, $\gamma$ determines the importance of the nuclear norm relative to the logarithmic barrier function.
The convexity of~(\ref{eq.CCP}) follows from the convexity of the objective function
\[
	J_p(X,Z) \;\DefinedAs\; -\logdet X \,+\, \gamma\,\norm{Z}_*
\]	
and the convexity of the constraint set. Problem~(\ref{eq.CCP}) can be equivalently expressed as follows,
	\begin{align} 
	\ba{cl}
	\minimize\limits_{X,\, Z}
	&
	-\logdet X \,+\, \gamma\,\norm{Z}_*
	\\[.1cm]
	\subject
	&
	~\cA\, X \,+\, \cB\, Z \,-\, \cC  \,=\, 0,
	 \ea
	 \tag{CC-1}
	\label{eq.CCP1}
\end{align} 
where the constraints are now given by
\be
	\tbo{\cA_1}{\cA_2} X \,+\, \tbo{I}{0} Z \,-\, \tbo{0}{G} \,=\, 0.
	\non
\ee
Here, $\cA_1: \bbC^{n\times n} \to \bbC^{n\times n}$ and $\cA_2: \bbC^{n\times n} \to \bbC^{p \times p}$ are linear operators, with
\be
	\ba{c}
	\cA_1(X) \; \DefinedAs \; A \, X \; + \; XA^*
	\\[.15cm]
	\cA_2(X) \; \DefinedAs \; (C \, X \, C^*)\circ E
	\ea
	\non
\ee
{and their adjoints, with respect to the standard inner product $\left<M_1, M_2\right> \DefinedAs \trace \, (M_1^* M_2)$, are given by
\be
	\ba{c}
	\cA_1^\dagger (Y) \; = \; A^* \, Y \;+\; Y A
	\\[.15cm]
	\cA_2^\dagger (Y) \; = \; C^* (E\circ Y) \, C.
	\ea
	\non
\ee}


	\vspace*{-2ex}
{
\subsection{SDP formulation and the dual problem}
	\label{sec.optimality_dual}
}

{
By splitting $Z$ into positive and negative definite parts,
\be
	Z \; = \; Z_+ \; - \; Z_-,~~~~Z_+ \; \succeq \; 0,~~~~Z_- \; \succeq \; 0
	\non
\ee
it can be shown~\cite[Section 5.1.2]{faz02} that~(\ref{eq.CCP1}) can be cast as an SDP,
\begin{align} 
	\ba{cl}
	\minimize\limits_{X,\, Z_+,\, Z_-}
	&
	-\logdet X
	\; + \; 
	\gamma
	\left( \trace\, (Z_+) \, + \, \trace\, (Z_-)\right)
	\\[.15cm]
	\subject
	& \cA_1(X) \,+\, Z_+ \, - \, Z_- \,=\, 0
	\\[0.15cm]
	& \cA_2 (X) \,-\, G \,=\, 0
	\\[0.15cm]
	& Z_+ \, \succeq \, 0,~~~~Z_- \, \succeq \, 0.
	 \ea
	 \tag{P}
	 \label{eq.CCPsdp}
\end{align}
We next use this SDP formulation to derive the Lagrange dual of the covariance completion problem~\eqref{eq.CCP1}. 
}

{
\begin{prop}
   \label{prop:dual}
The Lagrange dual of~(\ref{eq.CCPsdp}) is given by
\begin{equation}
	\ba{cl}
	\maximize\limits_{Y_1,\, Y_2}
	&
	\logdet\left( \cA_1^\dagger(Y_1) 
	\, + \, 
	\cA_2^\dagger(Y_2) \right) 
	\, - \; 
	\left<G,\, Y_2 \right> 
	\, + \; n
	\\[.25cm]
	\subject
	&
	\norm{Y_1}_2 
	\, \leq \, 
	\gamma
	 \ea
	 \tag{D}
	 \label{eq.dual}
\end{equation}
\end{prop}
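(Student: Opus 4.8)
The plan is to form the Lagrangian of the semidefinite program~\eqref{eq.CCPsdp}, eliminate the primal variables $X$, $Z_+$, $Z_-$ by minimizing the Lagrangian in closed form, and read off the dual function together with the constraints that keep it finite; this reproduces~\eqref{eq.dual} verbatim. Concretely, I would introduce Hermitian multipliers $Y_1 \in \bbC^{n\times n}$ for the constraint $\cA_1(X) + Z_+ - Z_- = 0$ and $Y_2 \in \bbC^{p\times p}$ for $\cA_2(X) - G = 0$, together with $\Lambda_+ \succeq 0$ and $\Lambda_- \succeq 0$ for the conic constraints $Z_+ \succeq 0$ and $Z_- \succeq 0$. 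No multiplier is needed for $X$ since the log-det term already forces $X \succ 0$. Using the adjoint identities $\inner{Y_1}{\cA_1(X)} = \inner{\cA_1^\dagger(Y_1)}{X}$ and $\inner{Y_2}{\cA_2(X)} = \inner{\cA_2^\dagger(Y_2)}{X}$ from Section~\ref{sec.algorithms}, the Lagrangian groups into the barrier, a part linear in $X$, and parts linear in $Z_+$ and $Z_-$:
\begin{align*}
L \;=\;& -\logdet X \,+\, \inner{\cA_1^\dagger(Y_1) + \cA_2^\dagger(Y_2)}{X} \,-\, \inner{G}{Y_2}
\\
&+\, \inner{\gamma I + Y_1 - \Lambda_+}{Z_+} \,+\, \inner{\gamma I - Y_1 - \Lambda_-}{Z_-}.
\end{align*}

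Next I would carry out the inner minimization. Because $Z_+$ and $Z_-$ range over all Hermitian matrices, the infimum of $L$ over them is $-\infty$ unless the linear coefficients vanish, i.e.\ $\Lambda_+ = \gamma I + Y_1$ and $\Lambda_- = \gamma I - Y_1$; combined with $\Lambda_\pm \succeq 0$ this is precisely $-\gamma I \preceq Y_1 \preceq \gamma I$, equivalently $\norm{Y_1}_2 \le \gamma$ for Hermitian $Y_1$, and the $\Lambda_\pm$ then disappear from $L$. For the minimization over $X \succ 0$, setting $W \DefinedAs \cA_1^\dagger(Y_1) + \cA_2^\dagger(Y_2)$, the map $X \mapsto -\logdet X + \inner{W}{X}$ is bounded below exactly when $W \succ 0$, with minimizer $X = W^{-1}$ and minimal value $\logdet W + n$ (the standard Fenchel-conjugate identity for the log-det barrier). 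Substituting, the dual function equals $\logdet\!\left(\cA_1^\dagger(Y_1) + \cA_2^\dagger(Y_2)\right) - \inner{G}{Y_2} + n$, with implicit domain $\cA_1^\dagger(Y_1) + \cA_2^\dagger(Y_2) \succ 0$, maximized over $Y_1$, $Y_2$ subject to $\norm{Y_1}_2 \le \gamma$, which is~\eqref{eq.dual}.

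I expect the only real work to be bookkeeping rather than anything conceptually deep: keeping the Hermitian structure straight through the adjoints $\cA_1^\dagger$ and $\cA_2^\dagger$, matching the sign conventions in the equality constraints, and verifying $\min_{X \succ 0}\{-\logdet X + \inner{W}{X}\} = \logdet W + n$. If one additionally wants zero duality gap between~\eqref{eq.CCPsdp} and~\eqref{eq.dual}, it suffices to invoke Slater's condition, using as a strictly feasible point any $X \succ 0$ that satisfies the two linear constraints together with a strictly positive splitting $Z_+, Z_- \succ 0$ of $-\cA_1(X)$ (e.g.\ $Z_- = cI$, $Z_+ = -\cA_1(X) + cI$ with $c$ large); this strong-duality step is, however, not needed to establish the form of~\eqref{eq.dual} itself.
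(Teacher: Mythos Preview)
Your proposal is correct and follows essentially the same route as the paper: form the Lagrangian with multipliers $Y_1,Y_2,\Lambda_\pm$, minimize in $Z_\pm$ to obtain $\|Y_1\|_2\le\gamma$, minimize in $X$ via the log-det conjugate to get $X=(\cA_1^\dagger(Y_1)+\cA_2^\dagger(Y_2))^{-1}$ and the value $\logdet(\cdot)+n-\langle G,Y_2\rangle$. The only cosmetic difference is that you force the $Z_\pm$-coefficients to vanish exactly (since after relaxation $Z_\pm$ range over all Hermitian matrices), whereas the paper states $\gamma I \pm Y_1 - \Lambda_\pm \succeq 0$ together with $Z_\pm\succeq 0$ and then invokes complementary slackness; both yield the same constraint and dual function.
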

where Hermitian matrices $Y_1$, $Y_2$ are the dual variables associated with the equality constraints in~\eqref{eq.CCPsdp}.}

{
\begin{proof}
	The Lagrangian of~\eqref{eq.CCPsdp} is given by
\be
	\ba{l}
	\cl(X,Z_\pm; Y_1, Y_2, \Lambda_\pm) 
	\; = \;
	-\logdet X
	\, +\,
	\gamma\, \trace \left(Z_+ \,+\, Z_-\right) \,-\, \left<\Lambda_+,\, Z_+\right> \,-\, \left<\Lambda_-,\, Z_-\right>
	\; +
	\\[.15cm]
	\hfill
	\left<Y_1,\, \cA_1(X) + Z_+ - Z_-\right> \,+\, \left<Y_2,\, \cA_2(X) - G \right>
	\ea
	\label{eq.lagrangian}
\ee
where Hermitian matrices $Y_1$, $Y_2$, and $\Lambda_\pm \succeq 0$ are Lagrange multipliers associated with the equality and inequality constraints in~\eqref{eq.CCPsdp}. Minimizing the Lagrangian with respect to $Z_+$ and $Z_-$ yields
\be
	\ba{l}
	\gamma\, I \; - \; \Lambda_+ \;+\; Y_1 \; \succeq \; 0,~~~~ Z_+ \; \succeq \; 0
	\\[.15cm]
	\gamma\,I \; - \; \Lambda_- \; - \; Y_1 \; \succeq \; 0,~~~~ Z_- \; \succeq \; 0.
	\ea
	\non
\ee
Because of the positive semi-definiteness of the dual variables $\Lambda_+$ and $\Lambda_-$, we also have that
\be
	\ba{rcccl}
	Y_1 \; + \; \gamma\,I &\!\!\succeq\!\!\!& \Lambda_+ &\!\!\!\succeq\!\!& 0
	\\[.15cm]
	- Y_1 \;+\; \gamma\,I &\!\!\succeq\!\!\!& \Lambda_- &\!\!\!\succeq\!\!& 0,
	\ea
	\non
\ee
which yields the constraint in~\eqref{eq.dual}
\be
	-\gamma \, I \; \preceq \; Y_1 \; \preceq \; \gamma \, I ~~\Longleftrightarrow~~ \norm{Y_1}_2 \; \leq \; \gamma.
	\label{eq.dual_constraint}
\ee
On the other hand, minimization of $\cL$ with respect to $X$ yields
\be
	X^{-1} \; = \; \cA_1^\dagger (Y_1) \; + \; \cA_2^\dagger (Y_2) \; \succ \; 0.
	\label{eq.Xoptimality}
\ee
Substitution of~\eqref{eq.Xoptimality} into~\eqref{eq.lagrangian} in conjunction with complementary slackness conditions
\be
	\ba{rcl}
	\left< \gamma\,I \,-\, \Lambda_+ \,+\, Y_1,\, Z_+ \right>
	&\!\!=\!\!&
	0
	\\[.15cm]
	\left< \gamma\,I \,-\, \Lambda_- \,-\, Y_1,\, Z_- \right>
	&\!\!=\!\!&
	0
	\ea
	\non
\ee
can be used to obtain the Lagrange dual function
	\be
	\ba{rcl}
	J_d(Y_1, Y_2)
	& \!\! = \!\! &
	\inf\limits_{X,\, Z_+,\, Z_-}
	\cl(X,Z_\pm; Y_1, Y_2, \Lambda_\pm) 
	\\[0.15cm]
	& \!\! = \!\! &
	\logdet \left( 
	\cA_1^\dagger(Y_1) 
	\; + \;
	\cA_2^\dagger(Y_2) 
	\right) 
	 \; - \; 
	\left<G,\, Y_2 \right> 
	\; + \; 
	n.
	\ea
	\non
\ee
\end{proof}
}

The dual problem~(\ref{eq.dual}) is a convex optimization problem with variables $Y_1 \in \bbC^{n\times n}$ and $Y_2 \in \bbC^{p \times p}$. These variables are dual feasible if the constraint in~(\ref{eq.dual}) is satisfied. In the case of primal and dual feasibility, any dual feasible pair $(Y_1,\, Y_2)$ gives a lower bound on the optimal value $J_p^\star$ of the primal problem~(\ref{eq.CCPsdp}). As we show next, the alternating minimization algorithm of Section~\ref{sec.AMA} can be interpreted as a proximal gradient algorithm on the dual problem and is developed to achieve sufficient dual ascent and satisfy~(\ref{eq.Xoptimality}).


	\vspace*{-2ex}
\subsection{Alternating Minimization Algorithm (AMA)}
\label{sec.AMA}

The logarithmic barrier function in~(\ref{eq.CCP}) is strongly convex over any compact subset of the positive definite cone~\cite{banelgdas08}. This makes it well-suited for the application of AMA, which requires strong convexity of the smooth part of the objective function~\cite{tse91}.

The augmented Lagrangian associated with~(\ref{eq.CCP1}) is
	\be		
	\ba{l}
	\cl_\rho (X, Z; Y_1, Y_2) 
	\; = \;
	\ds{-\logdet X \, + \, \gamma\, \norm{Z}_*}  
	\, +\,
	
	{
	\left<Y_1,\, \cA_1 (X) + Z \right>
	} 
	\, + \,
	
	{
	\left<Y_2,\, \cA_2 (X) - G \right>
	}
	~ +
	\\[0.15cm] 
	\hfill
	{ 
	\ds{
	\frac{\rho}{2}\, \norm{\cA_1 (X) \, + \, Z}_F^2}
	\; + \;
	\ds{
	\frac{\rho}{2}\, \norm{\cA_2 (X) \, - \, G}_F^2}
	}
	\ea
	\non
	\ee
{where $\rho$ is a positive scalar and $\norm{\cdot}_F$ is the Frobenius norm.}

AMA {consists of the following steps:}
{
\begin{subequations}
	\label{eq.AMA_steps}
	\begin{eqnarray}
	\hspace{-.5cm} X^{k+1} &\!\!\DefinedAs\!\!& \argmin\limits_{X} \,  \cl_0\, ( X,\, Z^k; \, Y_1^k,\, Y_2^k)
	\label{eq.AMA_Xmin}
	\\
	\hspace{-.5cm} Z^{k+1} &\!\!\DefinedAs\!\!& \argmin\limits_{Z} \, \cl_{\rho}\, ( X^{k+1},\, Z; \, Y_1^k,\, Y_2^k)
	\label{eq.AMA_Zmin}
	\\
	\hspace{-.5cm} 
	Y_1^{k+1}
	&\!\!\DefinedAs\!\!& 
	Y_1^k \,+\, \ds{\rho \left( \cA_1 (X^{k+1}) \, + \, Z^{k+1} \right)}
	\label{eq.AMA_dualupdate_1}
	\\
	\hspace{-.5cm} 
	Y_2^{k+1} 
	&\!\!\DefinedAs\!\!& 
	Y_2^k \,+\, \ds{\rho \left( \cA_2 (X^{k+1}) \, - \, G \right)}.
	\label{eq.AMA_dualupdate_2}
	\end{eqnarray}
\end{subequations}
}
These terminate when the duality gap
\[
	\Delta_{\mathrm{gap}}
	\; \DefinedAs \;
	-\logdet X^{k+1} \,+\, \gamma\,\norm{Z^{k+1}}_* \, - \, J_d \left(Y_1^{k+1}, Y_2^{k+1} \right)
\]
and the primal residual
\[
	\Delta_{\mathrm{p}} \;\DefinedAs\; \norm{\cA \, X^{k+1} \,+\, \cB\, Z^{k+1} \,-\, \cC}_F
\]
are sufficiently small, i.e., $|\Delta_{\mathrm{gap}}| \leq {\eps_1}$ and $\Delta_{\mathrm{p}} \leq {\eps_2}$. 
In the $X$-minimization step~\eqref{eq.AMA_Xmin}, AMA minimizes the Lagrangian $\cl_0$ {with respect to $X$}. This step is followed by a $Z$-minimization step~\eqref{eq.AMA_Zmin} in which the augmented Lagrangian $\cl_{\rho}$ is minimized with respect to $Z$. Finally, the Lagrange multipliers, $Y_1$ and $Y_2$, are updated based on the primal residuals with the step-size $\rho$.

In contrast to the Alternating Direction Method of Multipliers~\cite{boyparchupeleck11}, which minimizes the augmented Lagrangian $\cl_{\rho}$ in both $X$- and $Z$-minimization steps, AMA updates $X$ via minimization of the standard Lagrangian $\cl_0$. As shown below, in~\eqref{eq.Xsol}, use of AMA leads to a {\em closed-form expression\/} for $X^{k+1}$. Another differentiating aspect of AMA is that it works as a proximal gradient on the dual function; {see Section~\ref{sec.AMApg}}. This allows us to select the step-size $\rho$ in order to achieve sufficient dual ascent.

\subsubsection{Solution to the X-minimization problem~\eqref{eq.AMA_Xmin}}

At the $k$th iteration of AMA, minimizing the Lagrangian $\cl_0$ with respect to $X$ for fixed {$\{Z^{k},\, Y_1^k,\, Y_2^k\}$ yields
\begin{align}
	X^{k+1} 
	\; = \, \left(\cA^\dagger \left(Y_1^{k}, Y_2^{k} \right) \right)^{-1}
	\; = \,
	\left(
	\cA_1^\dagger (Y_1^k)
	\,+\,
	\cA_2^\dagger (Y_2^k)
	\right)^{-1}.
	\label{eq.Xsol}
\end{align}
}

\subsubsection{Solution to the Z-minimization problem~\eqref{eq.AMA_Zmin}}
\label{sec.AMA_Zmin}

For fixed $\{X^{k+1},\, Y_1^k,\, Y_2^k\}$, the augmented Lagrangian $\cl_{\rho}$ is minimized with respect to $Z$,
\begin{align}
	\ba{cl}
	\minimize\limits_{Z}  & \ds{\gamma\, \norm{Z}_* \; + \; \frac{\rho}{2} \, \norm{Z \, - \, V^k}_F^2}.
	\ea
	\label{eq.Zmin_step}
\end{align}
{By computing the singular value decomposition of the symmetric matrix
	\[
	V^k
	\; \DefinedAs \;
	-\left(\cA_1 ( X^{k+1} ) \, + \, (1/\rho)Y_1^k\right)
	\; = \; 
	U\,\Sigma\, U^*,
	\]
where $\Sigma$ is the diagonal matrix of the singular values $\sigma_i$ of $V^k$, the solution to~(\ref{eq.Zmin_step}) is obtained by singular value thresholding~\cite{caicanshe10},}
\[
	Z^{k+1} \;=\; \cS_{\gamma/\rho} (V^k).
\]
The soft-thresholding operator $\cS_\tau$ is defined as 
\[
	\cS_\tau (V^k) \; \DefinedAs\;U \, \cS_\tau (\Sigma) \,U^*,\,
	~~ 
	\cS_\tau (\Sigma) \;=\; \diag \left( \left(\sigma_i \,-\, \tau \right)_+ \right)
\]
with $a_+ \DefinedAs \max \, \{a, 0\}$. 


\subsubsection{Lagrange multiplier update}

The expressions for $X^{k+1}$ and $Z^{k+1}$ can be used to bring~\eqref{eq.AMA_dualupdate_1} and~\eqref{eq.AMA_dualupdate_2} into the following form
\[
	\ba{rcl}
	Y_1^{k+1} 
	&\!\!=\!\!& 
	\cT_{\gamma} \left(Y_1^k \,+\, \rho\, \cA_1 (X^{k+1}) \right)
	\\[.2cm]
	Y_2^{k+1} 
	&\!\!=\!\!& 
	Y_2^k \,+\, \rho \left( \cA_2 ( X^{k+1} ) \, - \, G \right).
	\ea
\]
For Hermitian matrix $M$ with singular value decomposition $M = U\,\Sigma\, U^*$, $\cT_{\tau}$ is the saturation operator, 
\[
	\ba{rrl}
	\cT_\tau (M) 
	& \!\! \DefinedAs \!\! &
	U\, \cT_\tau (\Sigma)\, U^*
	\\[0.15cm]
	\cT_\tau (\Sigma) 
	& \!\! = \!\! &
	\diag \left( \min \left( \max( \sigma_i, -\, \tau ), \tau \right) \right)
	\ea
\]
which restricts the singular values of $M$ between $-\tau$ and $\tau$. 
The saturation and soft-thresholding operators are related via
\be
	M \;=\; \cT_\tau(M) \,+\, \cS_\tau(M).
	\label{eq.clip_identity}
\ee
{
The above updates of Lagrange multipliers guarantee dual feasibility at each iteration, i.e., $\norm{Y_1^{k+1}}_2 \leq \gamma$ for all $k$,} which justifies the choice of stopping criteria in ensuring primal feasibility of the solution.


\subsubsection{Choice of step-size for the dual update~{\eqref{eq.AMA_dualupdate_1},~\eqref{eq.AMA_dualupdate_2}}}
\label{sec.stepsize_rule}

We follow an enhanced variant of AMA~\cite{dalraj14} which utilizes an adaptive BB step-size selection~\cite{barbor88} in~\eqref{eq.AMA_Zmin},~{\eqref{eq.AMA_dualupdate_1}, and~\eqref{eq.AMA_dualupdate_2}} to guarantee sufficient dual ascent and positive definiteness of $X$. Our numerical experiments indicate that this {heuristic} provides substantial acceleration relative to the use of a fixed step-size. Since the standard BB step-size may not always satisfy the feasibility or the sufficient ascent conditions, we employ backtracking to determine an appropriate step-size.

At the $k$th iteration of AMA, an initial step-size,
	{
\be
\label{eq.BBstepsize}
	\ba{l}
	\rho_{k,0}
	\; = \,
	\dfrac{\ds{\sum_{i \, = \, 1}^2} \, \norm{Y_i^{k+1} - Y_i^{k}}_F^2}
	{\ds{\sum_{i \, = \, 1}^2} \, \left<Y_i^{k+1} - Y_i^{k},\, \nabla J_d(Y_i^{k}) - \nabla J_d(Y_i^{k+1}) \right>},
	\ea
\ee
}
is adjusted through a backtracking procedure to guarantee positive definiteness of the subsequent iterate of~(\ref{eq.AMA_Xmin}) and sufficient ascent of the dual function,
{
\begin{subequations}
	\label{eq.AMA_conditions}
	\begin{eqnarray}
	\cA^\dagger \left(Y_1^{k+1},\, Y_2^{k+1}\right) 
	&\!\! \succ \!\!&
	0
	\label{eq.AMA_Xpos}
	\\
	J_d \left(Y_1^{k+1},\, Y_2^{k+1}\right) 
	&\!\! \ge \!\!&
	J_d \left(Y^k\right) 
	\;+\;
	\ds{\sum_{i \, = \, 1}^2} 
	\left(
	\left<\nabla J_d(Y_i^k),\, Y_i^{k+1} - Y_i^k \right>
	+
	\dfrac{1}{2\rho_k} \, \norm{Y_i^{k+1} - Y_i^k}_F^2
	\right).
	\label{eq.AMA_suffascent}
	\end{eqnarray}
\end{subequations}
}
Here, $\nabla J_d$ is the gradient of the dual function {and the right-hand-side of~(\ref{eq.AMA_suffascent}) is a local quadratic approximation of the dual objective around $Y_1^k$ and $Y_2^k$. Furthermore,}~(\ref{eq.AMA_Xpos}) guarantees the positive definiteness of $X^{k+1}$; cf.~\eqref{eq.Xsol}. 

Our customized AMA is summarized as Algorithm~\ref{alg.AMA}.

\begin{algorithm}
\caption{Customized Alternating Minimization Algorithm}
\label{alg.AMA}
\begin{algorithmic}
\STATE \textbf{input:} $A$, $G$, $\gamma > 0$, tolerances {$\eps_1$, $\eps_2$}, and backtracking constant $\beta\in (0, 1)$.
\vspace*{0.15cm}
\STATE \textbf{initialize:} $k=0$, $\rho_{0,0}=1$, $\Delta_{\mathrm{gap}} = \Delta_{\mathrm{p}} = 2\eps_1$, $Y_2^0 = \mathrm{O}_{n\times n}$, and choose $Y_1^0$ such that $\cA_1^\dagger(Y_1^0)=(\gamma/\norm{Y_1^0}_2) I_{n\times n}$.
\vspace*{0.05cm}
\STATE \textbf{while:} $| \Delta_{\mathrm{gap}} | > {\eps_1}$ {and} $\Delta_{\mathrm{p}} > {\eps_2}$,
\\[.3cm]
\begin{tabular}{rcl}
\quad $X^{k+1}$ &$\!\!\!\!=\!\!\!\!$& $(\cA^\dagger (Y_1^k, Y_2^k))^{-1}$
\end{tabular}
\\[.2cm]
~\,\quad compute $\rho_k$: Largest feasible step in $\{\beta^j \rho_{k,0}\} _{j=0,1,\ldots}$
\\[.15cm]
~\,\quad such that $Y_1^{k+1}$ and $Y_2^{k+1}$ satisfy~\eqref{eq.AMA_conditions}
\\[.3cm]
\begin{tabular}{rcl}
\quad $Z^{k+1}$ &$\!\!\!\!=\!\!\!\!$& $\argmin\limits_{Z} \, \cl_{\rho_k}\, ( X^{k+1},\, Z,\, Y_1^k,\, Y_2^k)$
\\[.35cm]
$Y_1^{k+1}$ &$\!\!\!\!=\!\!\!\!$& $Y_1^k \,+\, \ds{\rho \left( \cA_1 (X^{k+1}) \, + \, Z^{k+1} \right)}$
\\[.25cm]
$Y_2^{k+1}$ &$\!\!\!\!=\!\!\!\!$& $Y_2^k \,+\, \ds{\rho \left( \cA_2 (X^{k+1}) \, - \, G \right)}$
\\[.25cm]
$\Delta_{\mathrm{p}}$ &$\!\!\!\!=\!\!\!\!$& $\norm{\cA\, X^{k+1} \,+\, \cB\, Z^{k+1} \,-\, \cC}_F$
\\[.25cm]
{$\Delta_{\mathrm{gap}}$} &{$\!\!\!\!=\!\!\!\!$}& {$-\logdet X^{k+1} \,+\, \gamma\,\norm{Z^{k+1}}_* \, -\, J_d \left(Y_1^{k+1}, Y_2^{k+1} \right)$}
\\[.25cm]
$k$ &$\!\!\!\!=\!\!\!\!$& $k+1$
\\[.2cm]
\end{tabular}
\\
{~\,\quad choose $\rho_{k,0}$ based on~\eqref{eq.BBstepsize}} 
\vspace*{0.15cm}
\STATE \textbf{endwhile}
\vspace*{0.15cm}
\STATE \textbf{output:} $\eps$-optimal solutions, $X^{k+1}$ and $Z^{k+1}$.
\end{algorithmic}
\end{algorithm}

\subsubsection{Computational complexity}
	\label{sec.complexity}

The $X$-minimization step in AMA involves a matrix inversion, which takes $O(n^3)$ operations. Similarly, the $Z$-minimization step amounts to a singular value decomposition and it requires $O(n^3)$ operations.  {Since this step is embedded within an iterative backtracking procedure for selecting the step-size $\rho_k$ (cf. Section~\ref{sec.stepsize_rule}), if the step-size selection takes $q$ inner iterations the total computational cost for a single iteration of AMA is $O(q n^3)$.} In contrast, the worst-case complexity of standard SDP solvers is $O(n^6)$.

\vspace*{-2ex}

\subsection{Comparison with ADMM}

Another splitting method that can be used to solve the optimization problem~\eqref{eq.CCP} is the Alternating Direction Method of Multipliers (ADMM). This method is well-suited for large-scale and distributed optimization problems and it has been effectively employed in low-rank matrix recovery~\cite{taoyua11}, sparse covariance selection~\cite{yua12}, image denoising and magnetic resonance imaging~\cite{golosh09}, sparse feedback synthesis~\cite{linfarjovACC12,linfarjovTAC13admm}, system identification~\cite{ayaszn12,hanliuvan12,liuhanvan13}, and many other applications~\cite{boyparchupeleck11}. In contrast to AMA, ADMM minimizes the augmented Lagrangian in each step of the iterative procedure. In addition, ADMM does not have efficient step-size selection rules. Typically, either a constant step-size is selected or the step-size is adjusted to keep the norms of primal and dual residuals within a constant factor of one another~\cite{boyparchupeleck11}. 

While the $Z$-minimization step is equivalent to that of AMA, the $X$-update in ADMM is obtained by minimizing the augmented Lagrangian. This amounts to solving the following optimization problem
\be
	\ba{cl}
	\minimize\limits_{X}
	&\!\!
	-\logdet X
	\;+\;
	\dfrac{\rho}{2} \, \ds{\sum_{i \, = \, 1}^2} \,\norm{\cA_i (X) \,-\, U_i^k}_F^2 	
	 \ea
	 \label{eq.admmXmin}
\ee
where $U_1^k \DefinedAs -\left( Z^k + (1/\rho)Y_1^k \right)$ and $U_2^k \DefinedAs G - (1/\rho)Y_2^k$. From first order optimality conditions we have
\[
	-\,X^{-1} 
	\,+\, 
	\rho\, \cA_1^\dagger ( \cA_1 (X) - U_1^k )
	\,+\, 
	\rho\, \cA_2^\dagger ( \cA_2 (X) - U_2^k )
	\;=\;
	0.
\]
Since $\cA_1^\dagger \cA_1$ and $\cA_2^\dagger \cA_2$ are not unitary operators, the $X$-minimization step {\em does not have an explicit solution}.

In what follows, we use a proximal gradient method~\cite{parboy13} to update $X$. By linearizing the quadratic term in~\eqref{eq.admmXmin} around the current inner iterate $X_i$ and adding a quadratic penalty on the difference between $X$ and $X_i$, $X_{i+1}$ is obtained as the minimizer of 
\be
	-\logdet X
	\; + \;
	\rho\, \ds{\sum_{j \, = \, 1}^2} \inner{\cA_j^\dagger\left(\cA_j (X_i) - U_j^k \right)}{X}
	\; + \;
	\dfrac{\mu}{2} \, \norm{X \,-\, X_i}_F^2.
	\label{eq.proximalXstep}
\ee
To ensure convergence of the proximal gradient method~\cite{parboy13}, the parameter $\mu$ has to satisfy $\mu \ge \rho\, \lambda_{\max}( \cA_1^\dagger \cA_1 + \cA_2^\dagger \cA_2)$, where we use power iteration to compute the largest eigenvalue of the operator $ \cA_1^\dagger \cA_1 + \cA_2^\dagger \cA_2$.

By taking the variation of~\eqref{eq.proximalXstep} with respect to $X$, we obtain the first order optimality condition
\be
	\label{eq.x-min-sol}
	\ba{rcl}
	\mu\, X \,-\, X^{-1}
	&\!\! = \!\!&
	\mu\, X_i  
	\,-\,
	\rho\, 
	\ds{\sum_{j \, = \, 1}^2} \cA_j^\dagger\left(\cA_j (X_i) - U_j^k \right).
	\ea
\ee
The solution to~\eqref{eq.x-min-sol} is given by  
	\[
	X_{i+1} 
	\; = \; 
	V \, \diag \left( g \right) V^*,
	\]
where the $j$th entry of the vector $g \in \bbR^{n}$ is given by
\[
	\ds{g_j
	\;=\;
	\dfrac{\lambda_j}{2\mu} \,+\, \sqrt{\left(\dfrac{\lambda_j}{2\mu}\right)^2 \,+\, \dfrac{1}{\mu}}}.
\]
Here, $\lambda_j$'s are the eigenvalues of the matrix on the right-hand-side of~\eqref{eq.x-min-sol} and $V$ is the matrix of the corresponding eigenvectors. As it is typically done in proximal gradient algorithms~\cite{parboy13}, starting with $X_0 \DefinedAs X^k$, we obtain $X^{k+1}$ by repeating inner iterations until the desired accuracy is reached.

The above described method involves an eigenvalue decomposition in each inner iteration of the $X$-minimization problem, which requires $O(n^3)$ operations. Therefore, if the $X$-minimization step takes $q$ inner iterations to converge, a single outer iteration of ADMM requires $O(q n^3)$ operations. This is in contrast to AMA where the explicit update of $X$ takes $O(n^3)$ operations.  {Thus, ADMM and AMA have similar computational complexity; cf.~Section~\ref{sec.complexity}. However, in Section~\ref{sec.example} we demonstrate that, relative to ADMM, customized AMA provides significant speed-up via a heuristic step-size selection (i.e., a BB step-size initialization followed by backtracking).}

We finally note that, when both parts of the objective function are strongly convex, an accelerated variant of ADMM can be employed~\cite{golodosetbar14}. However, the presence of the nuclear norm in~\eqref{eq.CCP} prevents us from using such techniques. For weakly convex objective functions, restart rules in conjunction with acceleration techniques can be used to reduce oscillations that are often encountered in first-order iterative methods~\cite{odocan13,golodosetbar14}.  {Since our computational experiments do not suggest a significant improvement using restart rules, we refrain from further discussing this variant of ADMM in Section~\ref{sec.example}.
}


\subsection{AMA as a proximal gradient on the dual}
	\label{sec.AMApg}

In the follow up section, Section~\ref{sec.convergence}, we show that the gradient of the dual objective function over a convex domain is Lipschitz continuous. In the present section, we denote a bound on the Lipschitz constant by $L$, and prove that AMA with step-size $\rho = 1/L$ works as a proximal gradient on the dual problem. 
This implies that~\eqref{eq.AMA_dualupdate_1} and~\eqref{eq.AMA_dualupdate_2} are equivalent to the updates obtained by applying the proximal gradient algorithm to~\eqref{eq.dual}.

{
The dual problem~\eqref{eq.dual} takes the following form
\begin{align}
\label{eq.prox_general}
	\ba{cl}
	\minimize\limits_{Y_1,Y_2}
	&\!\!
	f(Y_1,Y_2)
	\,+\,
	g(Y_1,Y_2)
	 \ea
\end{align}
where $f(Y_1,Y_2) = -\logdet \cA^\dagger(Y_1, Y_2) - \inner{G}{Y_2}$ and $g(Y_1, Y_2)$ denotes the indicator function
\[
	\cI(Y_1)
	\;=\;
	\left\{
	\ba{rl}
		0,~
		&
		\norm{Y_1}_2 \leq \gamma
		\\
		+\infty,~
		&
		\mathrm{otherwise}.
	\ea
	\right.
\]
Both $f$: $(\bbC^{n\times n},\bbC^{p\times p}) \rightarrow \bbR$ and $g$: $(\bbC^{n\times n},\bbC^{p\times p}) \rightarrow \bbR \cup \{+\infty\}$ are closed proper convex functions and $f$ is continuously differentiable. 
For  {$Y_1 \in \bbC^{n \times n}$ and $Y_2 \in \bbC^{p \times p}$}, the proximal operator of $g$, $\mathrm{prox}_g$: $(\bbC^{n\times n},\bbC^{p\times p}) \rightarrow (\bbC^{n\times n},\bbC^{p\times p})$ is given by
\begin{align}
	\ba{rcl}
	\mathrm{prox}_g (V_1, V_2)
	&\!\! = \!\!&
	\argmin\limits_{Y_1, Y_2}\,
	g(Y_1,Y_2)
	\;+\;
	\dfrac{1}{2}\, \ds{\sum_{i \, = \, 1}^2} \, \norm{Y_i \, - \, V_i}_F^2
	\ea
	\non
\end{align}
where $V_1$ and $V_2$ are fixed matrices. For~\eqref{eq.prox_general}, the proximal gradient method~\cite{parboy13} determines the updates as
 {
\[
	\left(Y_1^{k+1},\, Y_2^{k+1}\right)
	\;\DefinedAs\;
	\mathrm{prox}_{\rho g} 
	\!
	\left(Y_1^k \,-\, \rho \, \nabla_{Y_1} f (Y_1^k, Y_2^k),\, Y_2^k \,-\, \rho \, \nabla_{Y_2} f (Y_1^k, Y_2^k) \right)
\]
}
where $\rho>0$ is the step-size. For $\rho \in (0,1/L]$ this method converges with rate $O(1/k)$~\cite{comwaj05}.
}

{
Application of the proximal gradient method to the dual problem~\eqref{eq.prox_general} yields
\begin{subequations}
\label{eq.prox-update}
	\begin{eqnarray}
		Y_1^{k+1}
		&\!\! \DefinedAs \!\! &
		\argmin\limits_{Y_1} \,
		\left< \nabla_{Y_1} (-\logdet \cA^\dagger(Y_1^k, Y_2^k)), Y_1\right> 
		\,+\;
		\cI \left( Y_1 \right) 
		\,+\; 
		\dfrac{L}{2} \, \norm{Y_1 \,-\, Y_1^k}_F^2
		\label{eq.prox-update-Y1}
		\\
		Y_2^{k+1}
		&\!\! \DefinedAs \!\! &
		\argmin\limits_{Y_2} \,
		\left< \nabla_{Y_2} (-\logdet \cA^\dagger(Y_1^k, Y_2^k)), Y_2\right> 
		\,+\,
		\left< G, Y_2 \right> 
		\,+\;
		\dfrac{L}{2} \, \norm{Y_2 \,-\, Y_2^k}_F^2 
		\label{eq.prox-update-Y2}
	\end{eqnarray}
\end{subequations}
The gradient in~\eqref{eq.prox-update-Y1} is determined by
\begin{align}
	\ba{rcl}
		\nabla_{Y_1} (-\logdet \cA^\dagger(Y_1^k, Y_2^k))
		&\!\!=\!\!&
		-\cA_1(\cA^\dagger(Y_1^k, Y_2^k)^{-1})
	\ea
	\non
\end{align}
and we thus have
\be
	\label{eq.prox_Y1}
	\ba{rcl}
	Y_1^{k+1}
	&\!\! \DefinedAs \!\!&
	\argmin\limits_{Y_1} 
	\;
	\cI \left(Y_1\right)
	\; + \;
	\dfrac{L}{2}\, \norm{Y_1 \, - \, ( Y_1^k \, + \, \dfrac{1}{L}\, \cA_1(\cA^\dagger(Y_1^k, Y_2^k)^{-1}) )}_F^2.
	\ea
\ee
Since $X^{k+1} = \cA^\dagger(Y_1^k, Y_2^k)^{-1}$, it follows that the dual update $Y_1^{k+1}$ given by~\eqref{eq.AMA_dualupdate_1} solves~\eqref{eq.prox_Y1} with $\rho = 1/L$. This is because the saturation operator $\cT_\gamma$ represents the proximal mapping for the indicator function $\cI \left( Y_1 \right)$~\cite{parboy13}. Finally, using the first order optimality conditions for~\eqref{eq.prox-update-Y2} it follows that the dual update
\[
Y_2^{k+1} \;=\; Y_2^{k} + \frac{1}{L} ( \cA_2(\cA^\dagger(Y_1^k,Y_2^k)^{-1}) - G )
\]
is equivalent to~\eqref{eq.AMA_dualupdate_2} with $\rho = 1/L$.
}

\subsection{Convergence analysis}
\label{sec.convergence}
	
In this section we use the equivalence between AMA and the proximal gradient algorithm (on the dual problem) to prove convergence of our customized AMA. 
Before doing so, we first establish Lipschitz continuity of the gradient of the logarithmic barrier in the dual objective function over a pre-specified convex domain, and show that the dual iterates are bounded within this domain.
These two facts allow us to establish sub-linear convergence of AMA for~\eqref{eq.CCP}. Proofs of all technical statements presented here are provided in the appendix.

{
We define the ordered pair $Y = (Y_1, Y_2) \in \bbH^n \times \bbH^p$ where
\[
	\bbH^n \times \bbH^p
	\;=\;
	\big{\{} (Y_1,Y_2) ~|~ Y_1 \in \bbH^n ~~\mathrm{and}~~ Y_2 \in \bbH^p \big{\}},
\]
with $\bbH^n$ denoting the set of Hermitian matrices of dimension $n$. We also assume the existence of an optimal solution $\bar{Y} = (\bar{Y}_1, \bar{Y}_2)$ which is a fixed point of the dual updates~\eqref{eq.AMA_dualupdate_1} and~\eqref{eq.AMA_dualupdate_2}, i.e.,
\[
	\ba{rcl}
	\bar{Y}_1
	&\!\!=\!\!& 
	\cT_{\gamma} \left(\bar{Y}_1 \;+\; \rho\, \cA_1 (\bar{X})\right)
	\\[.15cm]
	\bar{Y}_2
	&\!\!=\!\!& 
	\bar{Y}_2 \;+\; \rho \left( \cA_2 (\bar{X}) \,-\, G \right)
	\ea
\]
where $\bar{X} = \cA^\dagger(\bar{Y})^{-1}$. Since the proof of optimality for $\bar{Y}$ follows a similar line of argument made in~\cite{dalraj14}, we refrain from including further details.
}


{
While the gradient of $J_d$ is not Lipschitz continuous over the entire domain of $\bbH^n \times \bbH^p$, we show its Lipschitz continuity over the convex domain
\be
	\label{eq.domainY}
	\cD_{\alpha\beta}
	\;=\;
	\{\,Y  \in \bbH^n \times \bbH^p ~|~ 0 < \alpha\,I \preceq \cA^\dagger(Y) \preceq \beta\, I < \infty \,\} 
\ee
for any $0<\alpha<\beta<\infty$. This is stated in the next lemma, and its proof given in the appendix relies on showing that the Hessian of $J_d$ is bounded from above.
}

{
\begin{lemma}
\label{lemma:lipschitz}
	For $Y \in \cD_{\alpha\beta}$, the function $\logdet \cA^\dagger(Y)$ has a Lipschitz continuous gradient with Lipschitz constant $L = \sigma_{\max}^2(\cA^\dagger)/\alpha^2$, where ${\sigma_{\max}(\cA^\dagger)}$ is the largest singular value of the operator $\cA^\dagger$.
\end{lemma}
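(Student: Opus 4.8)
The plan is to treat $f(Y) \DefinedAs \logdet \cA^\dagger(Y)$ as a composition of the linear map $Y = (Y_1,Y_2) \mapsto \cA^\dagger(Y) = \cA_1^\dagger(Y_1) + \cA_2^\dagger(Y_2)$ with $\logdet$, compute its Hessian explicitly, bound that Hessian uniformly over $\cD_{\alpha\beta}$, and then conclude Lipschitz continuity of $\nabla f$ by integrating the Hessian along segments (which remain inside $\cD_{\alpha\beta}$ by convexity). First I would record the derivative formulas. Using $\tfrac{\mrd}{\mrd t}\logdet(M + tN)\big|_{t=0} = \trace(M^{-1}N)$, the chain rule, and the definition of the adjoint, the gradient is $\nabla f(Y) = \cA\big( (\cA^\dagger(Y))^{-1}\big)$, consistent with the expression for $\nabla_{Y_1}(-\logdet\cA^\dagger)$ already used in Section~\ref{sec.AMApg}. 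Differentiating again and using $\tfrac{\mrd}{\mrd t}(M+tN)^{-1}\big|_{t=0} = -M^{-1}NM^{-1}$, the Hessian acts on a direction $\delta Y = (\delta Y_1, \delta Y_2) \in \bbH^n\times\bbH^p$ as the quadratic form
\[
	\langle \nabla^2 f(Y)\,\delta Y,\,\delta Y\rangle
	\;=\;
	-\,\trace\!\left( M^{-1} W M^{-1} W \right)
	\;=\;
	-\,\big\| M^{-1/2} W M^{-1/2} \big\|_F^2 ,
\]
where $M \DefinedAs \cA^\dagger(Y) \succ 0$ and $W \DefinedAs \cA^\dagger(\delta Y) = \cA_1^\dagger(\delta Y_1) + \cA_2^\dagger(\delta Y_2)$ is Hermitian; the last equality uses that $M^{-1/2}WM^{-1/2}$ is Hermitian. (This also re-derives concavity of $f$, as expected since $J_d$ is the objective of a maximization.)

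Next I would bound this quadratic form on $\cD_{\alpha\beta}$. Since $M \succeq \alpha I$, we have $\|M^{-1/2}\|_2^2 = \|M^{-1}\|_2 = 1/\lambda_{\min}(M) \le 1/\alpha$, hence
\[
	\big| \langle \nabla^2 f(Y)\,\delta Y,\,\delta Y\rangle \big|
	\;\le\;
	\|M^{-1/2}\|_2^2\,\|W\|_F^2\,\|M^{-1/2}\|_2^2
	\;\le\;
	\frac{1}{\alpha^2}\,\|\cA^\dagger(\delta Y)\|_F^2
	\;\le\;
	\frac{\sigma_{\max}^2(\cA^\dagger)}{\alpha^2}\,\|\delta Y\|^2 ,
\]
where $\|\delta Y\|^2 = \|\delta Y_1\|_F^2 + \|\delta Y_2\|_F^2$ and the last step is the definition of the operator norm $\sigma_{\max}(\cA^\dagger)$. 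Thus $\|\nabla^2 f(Y)\|_2 \le L \DefinedAs \sigma_{\max}^2(\cA^\dagger)/\alpha^2$ for every $Y\in\cD_{\alpha\beta}$. Since $\cD_{\alpha\beta}$ is the preimage under the linear map $\cA^\dagger$ of the convex matrix interval $\{M : \alpha I \preceq M \preceq \beta I\}$, it is convex, so for $Y,\tilde Y\in\cD_{\alpha\beta}$ the segment $Y_t = \tilde Y + t(Y-\tilde Y)$ lies in $\cD_{\alpha\beta}$ and
\[
	\nabla f(Y) - \nabla f(\tilde Y)
	\;=\;
	\int_0^1 \nabla^2 f(Y_t)\,(Y - \tilde Y)\,\mrd t ,
\]
whence $\|\nabla f(Y) - \nabla f(\tilde Y)\| \le L\,\|Y - \tilde Y\|$. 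The linear term $-\langle G, Y_2\rangle$ in $J_d$ has vanishing Hessian, so the same $L$ bounds the Lipschitz constant of $\nabla J_d$.

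The main obstacle is not any single estimate but the bookkeeping around the product Hilbert space: one must be careful that $\cA^\dagger$ maps $\bbH^n\times\bbH^p$ (equipped with the sum-of-Frobenius-norms inner product) into $\bbH^n$, that $\cA^\dagger(\delta Y)$ is genuinely Hermitian so the symmetrization $M^{-1/2}WM^{-1/2}$ and the identity $\trace((M^{-1/2}WM^{-1/2})^2) = \|M^{-1/2}WM^{-1/2}\|_F^2$ are legitimate, and that the operator norm $\sigma_{\max}(\cA^\dagger)$ appearing in the statement is the one induced by these Frobenius norms. It is also worth noting that this particular bound uses only the lower bound $\alpha I \preceq \cA^\dagger(Y)$; the upper bound $\beta I$ is not needed here (it enters elsewhere to keep $\cD_{\alpha\beta}$ compact). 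An alternative that sidesteps the second derivative entirely is to apply the resolvent identity directly, $\nabla f(Y) - \nabla f(\tilde Y) = -\,\cA\big( M^{-1}\,\cA^\dagger(Y-\tilde Y)\,\tilde M^{-1}\big)$ with $M = \cA^\dagger(Y)$, $\tilde M = \cA^\dagger(\tilde Y)$, and bound the right-hand side by $\sigma_{\max}(\cA^\dagger)\cdot(1/\alpha)\cdot\sigma_{\max}(\cA^\dagger)\|Y-\tilde Y\|\cdot(1/\alpha)$; this yields the same constant $L = \sigma_{\max}^2(\cA^\dagger)/\alpha^2$.
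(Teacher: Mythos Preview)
Your proof is correct and follows essentially the same route as the paper: both compute the second-order (Hessian) quadratic form $\trace\big(M^{-1}WM^{-1}W\big)$ with $M=\cA^\dagger(Y)$ and $W=\cA^\dagger(\delta Y)$, rewrite it via $M^{-1/2}$, and bound it by $\sigma_{\max}^2(\cA^\dagger)/\alpha^2$ using only the lower bound $M\succeq\alpha I$. Your write-up is in fact more complete than the paper's, since you make explicit the convexity of $\cD_{\alpha\beta}$ and the segment-integration step that converts the Hessian bound into Lipschitz continuity of $\nabla f$.
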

}


We next show that the dual AMA iterations~\eqref{eq.AMA_dualupdate_1} and~\eqref{eq.AMA_dualupdate_2} are contractive, which is essential in establishing that the iterates are bounded within the domain $\cD_{\alpha\beta}$.

{
\begin{lemma}
\label{lemma:contractive}
	Consider the map $Y \mapsto Y^+$
	\begin{subequations}\label{eq:contractivemap}
	\begin{eqnarray}
	Y_1^+ &\!\!=\!\!& \cT_{\gamma} \left(Y_1 \;+\; \rho\, \cA_1 (\cA^\dagger(Y)^{-1}) \right)
	\\[.15cm]
	Y_2^+ &\!\!=\!\!& Y_2 \;+\; \rho \left( \cA_2 (\cA^\dagger(Y)^{-1}) \,-\, G \right),
	\end{eqnarray}
	\end{subequations}
where $Y = (Y_1, Y_2)$. Let $0<\alpha<\beta<\infty$ be such that
\[
	\alpha\,I \,\preceq\, \cA^\dagger( \bar{Y}) \,\preceq\, \beta\,I,
\]
where $\bar{Y} = (\bar{Y}_1, \bar{Y}_2)$ denotes a fixed point of \eqref{eq:contractivemap}.
Then, for any $0<\rho \leq \dfrac{2\, \alpha^4}{\beta^2\, \sigma_{\max}^2 (\cA)}$, the map~\eqref{eq:contractivemap} is contractive over $\cD_{\alpha\beta}$, that is, 
	\[
	\norm{Y^+ \,-\, \bar{Y}}_F 
	\; \leq \;
	\norm{Y \,-\, \bar{Y}}_F
	~~~
	\mbox{for any $Y \, \in \, \cD_{\alpha\beta}$.}
	\]
\end{lemma}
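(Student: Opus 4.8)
The plan is to lean on the proximal-gradient interpretation from Section~\ref{sec.AMApg}. The map $Y\mapsto Y^+$ in~\eqref{eq:contractivemap} is one proximal-gradient step on the dual objective, and in the $Y_1$-block the saturation operator $\cT_\gamma$ is $\mathrm{prox}_{\cI}$, i.e.\ the Euclidean projection onto the spectral-norm ball $\{M\in\bbH^n:\norm{M}_2\le\gamma\}$; as a projection onto a closed convex set it is nonexpansive in the Frobenius norm. Using this on the $Y_1$-component, and using the fixed-point relations $\bar{Y}_1=\cT_\gamma(\bar{Y}_1+\rho\,\cA_1(\bar{X}))$ and $\cA_2(\bar{X})=G$ (with $\bar{X}=\cA^\dagger(\bar{Y})^{-1}$) on the $Y_2$-component, which makes the $Y_2$-update an exact affine identity, I would reduce the claim to the linear-algebra inequality
\[
	\norm{(Y-\bar{Y})\,+\,\rho\,\cA(X-\bar{X})}_F^2 \;\le\; \norm{Y-\bar{Y}}_F^2 ,
\]
where $X=\cA^\dagger(Y)^{-1}$ and $\cA=(\cA_1,\cA_2)$ is the adjoint of $\cA^\dagger$ on $\bbH^n\times\bbH^p$ (so $\sigma_{\max}(\cA)=\sigma_{\max}(\cA^\dagger)$), and the Frobenius norm on the product space is the sum of the two block norms.

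Next I would expand the square. The cross term equals $2\rho\,\langle Y-\bar{Y},\,\cA(X-\bar{X})\rangle = 2\rho\,\langle\cA^\dagger(Y)-\cA^\dagger(\bar{Y}),\,X-\bar{X}\rangle = 2\rho\,\langle P-\bar{P},\,P^{-1}-\bar{P}^{-1}\rangle$, with $P\DefinedAs\cA^\dagger(Y)\succ0$ and $\bar{P}\DefinedAs\cA^\dagger(\bar{Y})\succ0$. Inserting the resolvent identity $P^{-1}-\bar{P}^{-1}=-P^{-1}(P-\bar{P})\bar{P}^{-1}$ and using the cyclic property of the trace together with Hermiticity of $P$ and $\bar{P}$, this cross term becomes $-2\rho\,\norm{M}_F^2$ where $M\DefinedAs\bar{P}^{-1/2}(P-\bar{P})P^{-1/2}$. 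This is just monotonicity of the matrix-inversion map, and it produces the negative term that must absorb the quadratic term $\rho^2\,\norm{\cA(X-\bar{X})}_F^2$.

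The remaining step is bookkeeping with operator norms over $\cD_{\alpha\beta}$. For $Y\in\cD_{\alpha\beta}$ one has $\alpha I\preceq P\preceq\beta I$, and $\alpha I\preceq\bar{P}\preceq\beta I$ holds by hypothesis on $\bar{Y}$; hence $\norm{P^{-1}}_2,\norm{\bar{P}^{-1}}_2\le 1/\alpha$ and $\norm{P^{1/2}}_2,\norm{\bar{P}^{1/2}}_2\le\sqrt{\beta}$. Writing $X-\bar{X}=-P^{-1}(P-\bar{P})\bar{P}^{-1}$ and $P-\bar{P}=\bar{P}^{1/2}MP^{1/2}$ and applying submultiplicativity gives
\[
	\norm{\cA(X-\bar{X})}_F \,\le\, \sigma_{\max}(\cA)\,\norm{X-\bar{X}}_F \,\le\, \frac{\sigma_{\max}(\cA)}{\alpha^2}\,\norm{P-\bar{P}}_F \,\le\, \frac{\beta\,\sigma_{\max}(\cA)}{\alpha^2}\,\norm{M}_F .
\]
Substituting into the expansion yields
\[
	\norm{Y^+-\bar{Y}}_F^2 \;\le\; \norm{Y-\bar{Y}}_F^2 \;-\; \rho\Bigl(2-\rho\,\frac{\beta^2\sigma_{\max}^2(\cA)}{\alpha^4}\Bigr)\norm{M}_F^2 ,
\]
and the hypothesis $\rho\le 2\alpha^4/(\beta^2\sigma_{\max}^2(\cA))$ makes the bracket nonnegative, giving the asserted contraction.

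There is no deep obstacle: the argument is a chain of Cauchy–Schwarz and submultiplicativity estimates around the monotonicity of $P\mapsto P^{-1}$. The one delicate point is to route the norm bounds so that exactly the stated threshold $2\alpha^4/(\beta^2\sigma_{\max}^2(\cA))$ appears — a sharper route (bounding $\norm{X-\bar{X}}_F\le\alpha^{-1}\norm{M}_F$ directly via $X-\bar{X}=-P^{-1/2}M^*\bar{P}^{-1/2}$) would yield the larger range $\rho\le 2\alpha^2/\sigma_{\max}^2(\cA)$, so one must deliberately pass through $\norm{X-\bar{X}}_F\le\alpha^{-2}\norm{P-\bar{P}}_F$ and $\norm{P-\bar{P}}_F\le\beta\norm{M}_F$. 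A minor point worth recording is that the proof uses only $Y\in\cD_{\alpha\beta}$ and not $Y^+\in\cD_{\alpha\beta}$, so no forward-invariance of the domain is needed here; that, together with boundedness of the whole iterate sequence inside $\cD_{\alpha\beta}$, is a consequence drawn afterwards from this contractivity and Lemma~\ref{lemma:lipschitz}.
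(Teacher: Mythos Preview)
Your argument is correct and reaches the stated threshold, but it is genuinely different from the paper's proof. The paper works \emph{infinitesimally}: after the same nonexpansiveness reduction to $h_\rho(Y)=Y+\rho\,\cA(\cA^\dagger(Y)^{-1})$, it linearizes $h_\rho$ at a generic point, shows that for any perturbation $\Delta Y$ the Jacobian satisfies $\|J_{h_\rho}(Y)\|\le 1$ via the two inequalities
\[
\langle \cA(\cA^\dagger(Y)^{-1}\cA^\dagger(\Delta Y)\cA^\dagger(Y)^{-1}),\,\Delta Y\rangle\ge 0,\qquad
\tfrac{\rho}{2}\,\|\cA(\cdots)\|_F^2\le \langle \cA(\cdots),\,\Delta Y\rangle,
\]
and then invokes the mean value theorem along the convex segment $Y_\delta=\delta Y+(1-\delta)\bar{Y}\in\cD_{\alpha\beta}$. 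You instead work \emph{discretely}: you expand $\|(Y-\bar{Y})+\rho\,\cA(X-\bar{X})\|_F^2$ directly, use the resolvent identity to identify the cross term as $-2\rho\,\|M\|_F^2$ with $M=\bar{P}^{-1/2}(P-\bar{P})P^{-1/2}$, and bound the quadratic term by the same chain of operator-norm estimates. Your route avoids the mean value theorem entirely and makes the role of monotonicity of $P\mapsto P^{-1}$ more explicit; it also surfaces the sharper admissible range $\rho\le 2\alpha^2/\sigma_{\max}^2(\cA)$ that the paper's derivation hides. The paper's Jacobian route, on the other hand, is the natural one if one later wants local linear-rate statements, since the derivative bound is what feeds into such arguments.
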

}


{
As noted above, it follows that the dual AMA iterates $\{Y^k\}$ belong to the domain $\cD_{\alpha\beta}$. This is stated explicitly next in Lemma \ref{lemma:iterates}.
In fact, the lemma establishes universal lower and upper bounds on $\cA^\dagger(Y^k)$ for all $k$. These bounds guarantees that the dual iterates $\{Y^k\}$ belong to the domain $\cD_{\alpha\beta}$ and that Lipschitz continuity of the gradient of the dual function is preserved through the iterations. 
}

{
\begin{lemma}
\label{lemma:iterates}
	Given a feasible initial condition $Y^0$, i.e., $Y^0$ satisfies $\cA^\dagger(Y^0)\succ 0$ and $\|Y_1^0\|_2\le \gamma$, let $\alpha$, $\beta > 0$
\[
	\ba{rcl}
		\beta
		&\!\!=\!\!&
		\sigma_{\max}(\cA^\dagger)\,\norm{Y^0 \,-\, \bar{Y}}_F \;+\; \norm{\cA^\dagger(\bar{Y})}_2
		\\[.15cm]
		\alpha
		&\!\!=\!\!&
		\det \cA^\dagger(Y^0)\, \beta^{1-n}\, \mre^{-\,\left<G, Y_2^0\right> \,-\; \gamma \sqrt{n}\,\sigma_{\max} (\cA_1^\dagger)\, \trace(\bar{X})}.
	\ea
\]	
Then, for any positive step-size $\rho \leq \dfrac{2\, \alpha^4}{\beta^2\, \sigma_{\max}^2 (\cA)}$, we have
\[
	\alpha \, I
	\;\preceq\;
	\cA^\dagger(Y^k)
	\;\preceq\;
	\beta\, I
	~~~
	\mbox{for all $k\ge 0$.}
\] 
\end{lemma}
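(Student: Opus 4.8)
The plan is to prove, by induction on $k$, that every dual iterate $Y^k$ lies in the set $\cD_{\alpha\beta}$ of~\eqref{eq.domainY}; since $\cD_{\alpha\beta}=\{Y : \alpha I \preceq \cA^\dagger(Y) \preceq \beta I\}$, this is exactly the assertion. Three ingredients drive the argument: Lemma~\ref{lemma:contractive}, which makes the AMA update non-expansive toward the fixed point $\bar Y$ on $\cD_{\alpha\beta}$ precisely for $\rho \le 2\alpha^4/(\beta^2\sigma_{\max}^2(\cA))$; the operator-norm estimate $\|\cA^\dagger(M)\|_2 \le \|\cA^\dagger(M)\|_F \le \sigma_{\max}(\cA^\dagger)\,\|M\|_F$; and monotonicity of the dual objective along the iterates, $J_d(Y^{k+1}) \ge J_d(Y^k)$, which is the descent-lemma guarantee attached to the proximal-gradient reading of AMA (Section~\ref{sec.AMApg}), so that $J_d(Y^k) \ge J_d(Y^0)$ for all $k$.

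For the \emph{upper bound} I would argue once, for any $Y$ with $\|Y - \bar Y\|_F \le \|Y^0 - \bar Y\|_F$: by the operator-norm estimate and the triangle inequality, $\|\cA^\dagger(Y)\|_2 \le \|\cA^\dagger(\bar Y)\|_2 + \sigma_{\max}(\cA^\dagger)\|Y - \bar Y\|_F \le \beta$, i.e. $\cA^\dagger(Y) \preceq \beta I$. This covers $\bar Y$ itself (distance zero) and, once the induction hypothesis supplies $Y^0,\dots,Y^k \in \cD_{\alpha\beta}$, it covers $Y^{k+1}$ as well, because iterating Lemma~\ref{lemma:contractive} gives $\|Y^{k+1} - \bar Y\|_F \le \dots \le \|Y^0 - \bar Y\|_F$.

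The heart of the proof is the \emph{lower bound}, and I would isolate it as the following self-contained step, then apply it three times. Suppose $\tilde Y = (\tilde Y_1, \tilde Y_2)$ satisfies $\cA^\dagger(\tilde Y) \succ 0$, $\cA^\dagger(\tilde Y) \preceq \beta I$, $\|\tilde Y_1\|_2 \le \gamma$, and $J_d(\tilde Y) \ge J_d(Y^0)$. Expanding $J_d(\tilde Y) \ge J_d(Y^0)$ yields $\logdet\cA^\dagger(\tilde Y) \ge \logdet\cA^\dagger(Y^0) - \langle G, Y_2^0\rangle + \langle G, \tilde Y_2\rangle$. Using $G = \cA_2(\bar X)$ (primal feasibility of $\bar X$), the adjoint splitting $\cA_2^\dagger(\tilde Y_2) = \cA^\dagger(\tilde Y) - \cA_1^\dagger(\tilde Y_1)$, positivity of $\bar X$ and $\cA^\dagger(\tilde Y)$, and Cauchy--Schwarz together with $\|\bar X\|_F \le \trace(\bar X)$ and $\|\tilde Y_1\|_F \le \sqrt n\,\|\tilde Y_1\|_2 \le \sqrt n\,\gamma$, one gets $\langle G, \tilde Y_2\rangle = \langle \bar X, \cA^\dagger(\tilde Y)\rangle - \langle \bar X, \cA_1^\dagger(\tilde Y_1)\rangle \ge -\gamma\sqrt n\,\sigma_{\max}(\cA_1^\dagger)\,\trace(\bar X)$. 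Since all $n$ eigenvalues of $\cA^\dagger(\tilde Y)$ lie in $(0,\beta]$, $\log\lambda_{\min}(\cA^\dagger(\tilde Y)) \ge \logdet\cA^\dagger(\tilde Y) - (n-1)\log\beta$; chaining these inequalities and exponentiating reproduces exactly $\lambda_{\min}(\cA^\dagger(\tilde Y)) \ge \alpha$. Taking $\tilde Y = Y^0$ (for which $J_d(Y^0)\ge J_d(Y^0)$ is trivial and $\cA^\dagger(Y^0)\succ0$, $\|Y_1^0\|_2\le\gamma$ are the stated hypotheses) settles the base case; taking $\tilde Y = \bar Y$ (dual optimal, with $\cA^\dagger(\bar Y)=\bar X^{-1}\succ0$ by~\eqref{eq.Xoptimality}) shows $\alpha I \preceq \cA^\dagger(\bar Y) \preceq \beta I$, which is the hypothesis needed to invoke Lemma~\ref{lemma:contractive} with this $\alpha,\beta$; and in the inductive step, taking $\tilde Y = Y^{k+1}$ gives $\cA^\dagger(Y^{k+1}) \succeq \alpha I$, completing the induction — provided the remaining hypotheses on $Y^{k+1}$, namely $\cA^\dagger(Y^{k+1})\succ0$, $\|Y_1^{k+1}\|_2\le\gamma$, and $J_d(Y^{k+1})\ge J_d(Y^k)$, are in place (these come respectively from the proximal-gradient step keeping $\cA^\dagger$ positive definite, from~\eqref{eq.clip_identity}/dual feasibility of the saturation update, and from the dual-ascent property).

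The step I expect to be the main obstacle is exactly the joint verification, inside the inductive step, that $\cA^\dagger(Y^{k+1})$ remains strictly positive definite and that $J_d(Y^{k+1}) \ge J_d(Y^k)$: both rest on the proximal-gradient interpretation of AMA (Section~\ref{sec.AMApg}) together with Lemma~\ref{lemma:lipschitz}, which furnishes the Lipschitz constant $L = \sigma_{\max}^2(\cA^\dagger)/\alpha^2$ of the dual gradient on $\cD_{\alpha\beta}$, and on checking that the admissible step-size range $\rho \le 2\alpha^4/(\beta^2\sigma_{\max}^2(\cA))$ is consistent with $\rho \le 1/L$ (using $\sigma_{\max}(\cA) = \sigma_{\max}(\cA^\dagger)$) so the descent lemma applies. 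Everything else — the upper bound, the telescoping of the contraction, and the eigenvalue/determinant bookkeeping — is routine.
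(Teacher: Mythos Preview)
Your proposal is correct and follows essentially the same inductive route as the paper: the upper bound via the triangle inequality and repeated application of the contraction in Lemma~\ref{lemma:contractive}, and the lower bound via the dual-ascent inequality $J_d(Y^k)\ge J_d(Y^0)$ combined with the bound $\langle G, Y_2^k\rangle \ge -\gamma\sqrt{n}\,\sigma_{\max}(\cA_1^\dagger)\,\trace(\bar X)$ and the determinant/eigenvalue bookkeeping $\logdet \cA^\dagger(Y^k)\le \log\lambda_{\min}+(n-1)\log\beta$. You are in fact more careful than the paper on two points: you explicitly verify $\bar Y\in\cD_{\alpha\beta}$ (required as a hypothesis in Lemma~\ref{lemma:contractive} but not checked in the paper's proof), and you correctly flag that the dual-ascent property and the strict positivity of $\cA^\dagger(Y^{k+1})$ are not automatic but must be drawn from the proximal-gradient interpretation together with the step-size/Lipschitz compatibility---the paper simply invokes ``the dual ascent property'' without justification.
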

}


{ 
Since AMA works as a proximal gradient on the dual problem its convergence properties follow from standard theoretical results for proximal gradient methods~\cite{comwaj05}. In particular, it can be shown that the proximal gradient algorithm with step-size $\rho = 1/L$ ($L$ being the Lipschitz constant in Lemma~\ref{lemma:lipschitz}) falls into a general family of {\em majorization-minimization} algorithms for which convergence properties are well-established~\cite{hunlan04}.}

{
The logarithmic barrier in the dual function is convex and continuously differentiable. Furthermore, its gradient is Lipschitz continuous over the domain $\cD_{\alpha\beta}$. Therefore, starting from the pair $Y^0 = (Y_1^0, Y_2^0)$ a positive step-size $\rho \leq \min \Big{\{} \dfrac{2\,\alpha^4}{\beta^2\, \sigma^2_{\max}(\cA)}, \dfrac{\alpha^2}{\sigma^2_{\max}(\cA^\dagger)} \Big{\}}$ guarantees that $\{Y^k\}$ converges to $\bar{Y}$ at a sub-linear rate that is no worse than $O(1/k)$,
\[
	J_d(Y^k) \;-\; J_d(\bar{Y})
	\; \leq \; O(1/k).
\]
Since $\cA^\dagger$ is not an invertible mapping, $-\logdet \cA^\dagger(Y)$ cannot be strongly convex over $\cD_{\alpha\beta}$. Thus, in general, AMA with a constant step-size cannot achieve a linear convergence rate~\cite{cheroc97,nes07}. In computational experiments, we observe that a heuristic step-size selection (BB step-size initialization followed by backtracking) can improve the convergence of AMA; see Section~\ref{sec.example}.
}




	\vspace*{-2ex}
\section{Computational experiments}
\label{sec.example}

We provide an example to demonstrate the utility of our modeling and optimization framework. This is based on a stochastically-forced mass-spring-damper (MSD) system.
Stochastic disturbances are generated by a low-pass filter,
\begin{subequations}\label{eq.MSD}
	\be
	\ba{rccl}
	\text{low-pass filter:}
	& \dot{\zeta} 
	& \!\! = \!\! & 
	-\zeta \; + \; d \ea
	\label{eq.MSDa}
	\ee
	where $d$ represents a zero-mean unit variance white process.
	The state space representation of the MSD system is given by
\be\ba{rccl}
	\text{MSD system:}
	& 
	\dot{x} 
	& \!\! = \!\! &  
	A \, x \; + \; B_{\zeta} \, \zeta
	\ea
	\label{eq.MSDb}
	\ee
	\end{subequations}
where the state vector $x = [\,p^*\, ~v^*\,]^*$, contains position and velocity of masses. Accordingly, the state and input matrices are
\begin{align}
	A \; = \, \tbt{O}{I}{-T}{-I},
	~~~~ 
	B_{\zeta} \; = \, \tbo{0}{I}
	\non
\end{align}
where $O$ and $I$ are zero and identity matrices of suitable sizes, and $T$ is a symmetric tridiagonal Toeplitz matrix with $2$ on the main diagonal and $-1$ on the first upper and lower sub-diagonals.


The steady-state covariance of system~\eqref{eq.MSD} can be found as the solution to the Lyapunov equation
\be
	\tilde{A}\, \Sigma \;+\; \Sigma\, \tilde{A}^* \; + \;  \tilde{B} \, \tilde{B}^* \; = \; 0
	\label{eq.MSD_lyap}
	\non
\ee
where
\be
	\tilde{A} \,=\, \tbt{A}{B_{\zeta}}{O}{-I},~~~~ \tilde{B} \,=\, \tbo{0}{I}
	\non
\ee
and
\be
	\Sigma \,=\, \tbt{\Sigma_{xx}}{\Sigma_{x \zeta}}{\Sigma_{\zeta x}}{\Sigma_{\zeta \zeta}}.
	\non
\ee
The matrix $\Sigma_{xx}$ denotes the state covariance of the MSD system, partitioned as,
\be
	\Sigma_{xx} \,=\, \tbt{\Sigma_{pp}}{\Sigma_{pv}}{\Sigma_{vp}}{\Sigma_{vv}}.
	\non
\ee
We assume knowledge of one-point correlations of the position and velocity of masses, i.e., the diagonal elements of matrices $\Sigma_{pp}$, $\Sigma_{vv}$, and $\Sigma_{pv}$. Thus, in order to account for these available statistics, we seek a state covariance $X$ of the MSD system which agrees with the available statistics.
 

 {
Additional information about our computational experiments, along with {\sc Matlab} source codes, \mbox{can be found at:}
\vspace*{-4ex}
\begin{center}
\url{http://www.ece.umn.edu/users/mihailo/software/ccama/}
\end{center}
}

 {
Recall that in~\eqref{eq.CCP}, $\gamma$ determines the importance of the nuclear norm relative to the logarithmic barrier function. While larger values of $\gamma$ yield solutions with lower rank, they may fail to provide reliable completion of the ``ideal'' state covariance $\Sigma_{xx}$. For various problem sizes, minimum error in matching $\Sigma_{xx}$ is achieved with $\gamma\approx1.2$ and for larger values of $\gamma$ the error gradually increases. For MSD system with $50$ masses, Fig.~\ref{fig.hpath_error} 
shows the relative error in matching $\Sigma_{xx}$ as a function of $\gamma$. The smallest error is obtained for $\gamma=1.2$, but this value of $\gamma$ does not yield a low-rank input correlation $Z$. For $\gamma=2.2$ reasonable matching is obtained ($82.7\%$ matching) and the resulting $Z$ displays a clear-cut in its singular values with $62$ of them being nonzero; see Fig.~\ref{fig.Zsvd}.
}

\begin{figure}
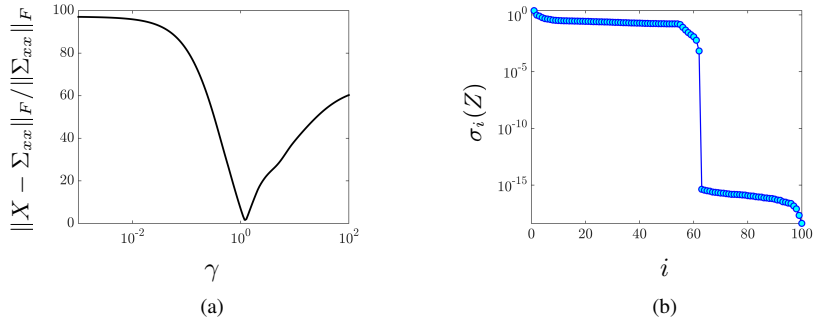

\begin{center}
\begin{tabular}{cccc}
	\hspace{-.4cm}
	\begin{tabular}{c}
		\\[-.8cm]
		{\footnotesize \rotatebox{90}{\small{$\norm{X-\Sigma_{xx}}_F/\norm{\Sigma_{xx}}_F$}}}
	\end{tabular}
	& \hspace{-1.1cm}
	\subfloat[]{
	\begin{tabular}{c}
	         \includegraphics[width=0.28\textwidth]{figures/hpath_N50_resSigma}
	         \label{fig.hpath_error}
		\\[-.1cm]
		$\gamma$
	\end{tabular}
	}
	&
	\begin{tabular}{c}
		\\[-1cm]
		{\small \rotatebox{90}{$\sigma_i(Z)$}}
	\end{tabular}
	& \hspace{-1.1cm}
	\subfloat[]{
	\begin{tabular}{c}
	         \includegraphics[width=0.28\textwidth]{figures/svd_N50_gamma2p2}
	         \label{fig.Zsvd}
	         \\[-.1cm]
		$i$
	\end{tabular}
	}
\end{tabular}
\end{center}
\caption{(a) The $\gamma$-dependence of the relative error (percents) between the solution $X$ to~(\ref{eq.CCP})  and the true covariance $\Sigma_{xx}$ for the MSD system with $50$ masses. (b) Singular values of the solution $Z$ to~(\ref{eq.CCP}) for the MSD system with $50$ masses and $\gamma=2.2$.}
\end{figure}

%

 {
For $\gamma=2.2$, Table~\ref{table.comp1} compares solve times of CVX~\cite{cvx} and the customized algorithms of Section~\ref{sec.algorithms}.  All algorithms were implemented in {\sc Matlab} and executed on a 3.4 GHz Core(TM) i7-2600 Intel(R) machine with 12GB RAM. Each method stops when an iterate achieves a certain distance from optimality, i.e., $\norm{X^k-X^\star}_F/\norm{X^\star}_F < \eps_1$ and $\norm{Z^k-Z^\star}_F/\norm{Z^\star}_F < \eps_2$. The choice of $\eps_1, \eps_2 = 0.01$, guarantees that the primal objective is within $0.1\%$ of $J_p(X^\star,Z^\star)$. For $n=50$ and $n=100$, CVX ran out of memory. Clearly, for large problems, AMA with BB step-size initialization significantly outperforms both regular AMA and ADMM.
}



\begin{figure}
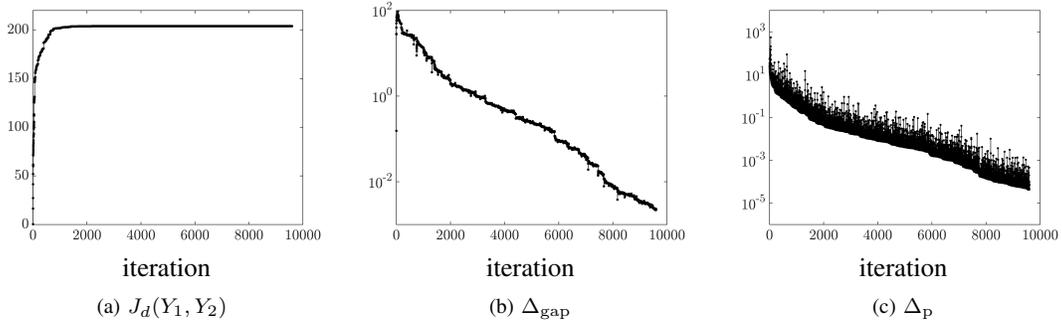

\centering
\begin{tabular}{ccc}
	\subfloat[$J_d(Y_1, Y_2)$]
	{
	\begin{tabular}{c}
	         \includegraphics[width=0.28\textwidth]{figures/AMAiter_N50_gamma2p22_dual}
	         \label{fig.AMAbehavior}
		\\ [-0.1cm]
		iteration
	\end{tabular}}
	& \hspace{-1cm}
	\subfloat[$\Delta_{\mathrm{gap}}$]
	{
	\begin{tabular}{c}
		\includegraphics[width=0.28\textwidth]{figures/AMAiter_N50_gamma2p22_dualitygap}
		\label{fig.dualitygap}
		\\ [-0.1cm]
		iteration
	\end{tabular}
	}
	& \hspace{-1cm}
	\subfloat[$\Delta_{\mathrm{p}}$]
	{
	\begin{tabular}{c}
		\includegraphics[width=0.28\textwidth]{figures/AMAiter_N50_gamma2p22_Primalresidual}
		\label{fig.Primalresidual}
		\\ [-0.1cm]
		iteration
	\end{tabular}
	}
\end{tabular}
\caption{Performance of $\mathrm{AMA_{BB}}$ for the MSD system with $50$ masses, $\gamma=2.2$, $\eps_1 = 0.005$, and $\eps_2 = 0.05$. (a) The dual objective function $J_d(Y_1, Y_2)$ of~\eqref{eq.CCP}; (b) the duality gap, $|\Delta_{\mathrm{gap}}|$; and (c) the primal residual, $\Delta_{\mathrm{p}}$.}
\label{fig.dualitygap_primalresidual}
\end{figure}

\begin{figure}
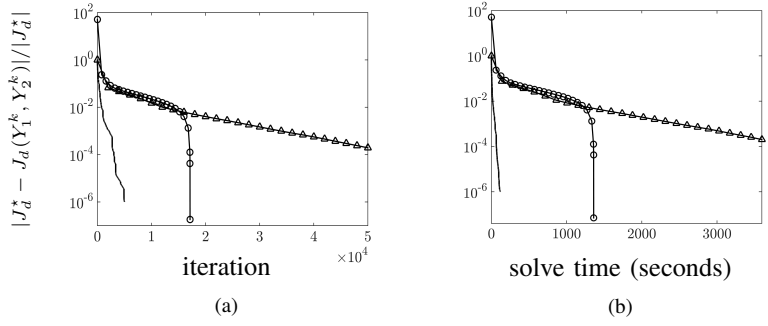

\begin{center}
	\vspace{.2cm}
	\begin{tabular}{ccc}
	\begin{tabular}{c}
		\\[-.8cm]
		\rotatebox{90}{\scriptsize $| J_d^\star - J_d(Y_1^k, Y_2^k) |/ | J_d^\star |$}
	\end{tabular}
	& \hspace{-.8cm}
	\subfloat[]
	{
	\begin{tabular}{c}
	         \includegraphics[width=0.28\textwidth,height=0.22\textwidth]{figures/CCP_errJd_MSD_N50_BW}
		\\ [-0.3cm]
		iteration
	\end{tabular}
	\label{fig.error_Jd_iteration}
	}
	& \hspace{-.6cm}
	\subfloat[]
	{
	\begin{tabular}{c}
	         \includegraphics[width=0.28\textwidth]{figures/CCP_errJd_MSD_N50_time_BW}
		\\ [-0.1cm]
		solve time (seconds)
	\end{tabular}
	\label{fig.error_Jd_time}
	}
	\end{tabular}
\end{center}
\caption{ {Convergence curves showing performance of ADMM ($\circ$) and AMA with ($-$) and without ($\triangle$) BB step-size initialization vs. (a) the number of iterations; and (b) solve times for the MSD system with $50$ masses and $\gamma=2.2$. Here, $J_d^\star$ is the value of the optimal dual objective.} 
}
\label{fig.error_Jd}
\end{figure}


\begin{figure}
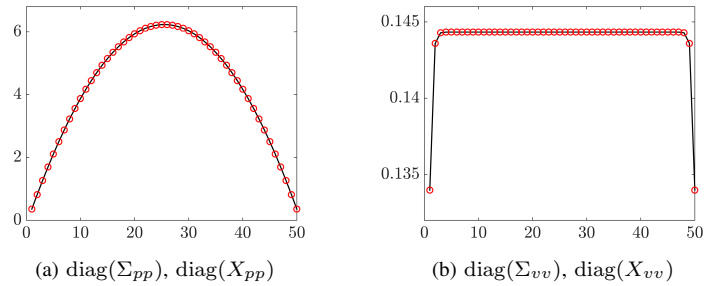

\centering
\begin{tabular}{cc}
\hspace{-.65cm}
\subfloat[$\diag(\Sigma_{pp})$, $\diag(X_{pp})$]
	{
\includegraphics[width=0.28\textwidth]{figures/NM_MSD_pp_N50}
\label{fig.diagXvv_feasibility}
	}
	& 
\subfloat[$\diag(\Sigma_{vv})$, $\diag(X_{vv})$]
	{
\includegraphics[width=0.28\textwidth]{figures/NM_MSD_vv_N50}
\label{fig.diagXpp_feasibility}
	}
\end{tabular}
\caption{Diagonals of (a) position and (b) velocity covariances for the MSD system with $50$ masses; Solid black lines show diagonals of $\Sigma_{xx}$ and red circles mark solutions of optimization problem~\eqref{eq.CCP}.}
\label{fig.diag_Xpp_Xvv_feasibility}
\end{figure}


\begin{figure}
\begin{center}
	\begin{tabular}{c}
	         \includegraphics[width=0.28\textwidth]{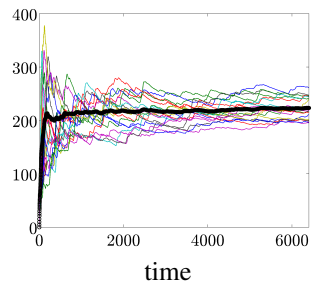}
	         \\[-.1cm]
		time
	\end{tabular}
\end{center}
\caption{Time evolution of the variance of the MSD system's state vector for twenty realizations of white-in-time forcing to~\eqref{eq.7b}. The variance averaged over all simulations is marked by the thick black line.}
\label{fig.stochastic_sim}
\end{figure}



 {
For MSD system with $50$ masses and $\gamma=2.2$, we now focus on the convergence of AMA. Figure~\ref{fig.AMAbehavior} shows monotonic increase of the dual objective function. The absolute value of the duality gap $| \Delta_{\mathrm{gap}} |$ and the primal residual $\Delta_{\mathrm{p}}$ demonstrate convergence of our customized algorithm; see Figs.~\ref{fig.dualitygap} and~\ref{fig.Primalresidual}. In addition, Fig.~\ref{fig.error_Jd_iteration} shows that regular AMA converges linearly to the optimal solution and that AMA with BB step-size initialization outperforms both regular AMA and ADMM. Thus, heuristic step-size initialization can improve the theoretically-established convergence rate. Similar trends are observed when convergence curves are plotted as a function of time; see~\ref{fig.error_Jd_time}. Finally, Fig.~\ref{fig.diag_Xpp_Xvv_feasibility} demonstrates feasibility of the optimization problem~\eqref{eq.CCP} and perfect recovery of the available diagonal elements of the covariance matrix.
}

For $\gamma=2.2$, the spectrum of $Z$ contains $50$ positive and $12$ negative eigenvalues. Based on Proposition~\ref{thm:lemma 1}, $Z$ can be decomposed into $B H^*+ H B^*$, where $B$ has $50$ independent columns. In other words, the identified $X$ can be explained by driving the state-space model with $50$ stochastic inputs $u$. The algorithm presented in Section~\ref{sec:fac} is used to decompose $Z$ into $B H^*+H B^*$. For the identified input matrix $B$, the design parameter $K$ is then chosen to satisfy the optimality criterion described in Section~\ref{sec.feedback_control}. This yields the optimal filter~\eqref{eq:linearfeedback} that generates the stochastic input $u$. We use this filter to validate our approach as explained next.

We conduct linear stochastic simulations of system~\eqref{eq.7b} with zero-mean unit variance input $w$. Figure~\ref{fig.stochastic_sim} shows the time evolution of the state variance of the MSD system. Since proper comparison requires ensemble-averaging, we have conducted twenty stochastic simulations with different realizations of the stochastic input $w$ to~\eqref{eq.7b}. The variance, averaged over all simulations, is given by the thick black line. Even though the responses of individual simulations differ from each other, the average of twenty sample sets asymptotically approaches the correct steady-state variance.


The recovered covariance matrix of mass positions $X_{pp}$ resulting from the ensemble-averaged simulations {of~\eqref{eq.7b}} is shown in Fig.~\ref{fig.Xpp}. We note that
(i) only diagonal elements of this matrix (marked by the black line) are used as data in the optimization problem~\eqref{eq.CCP}, and that
(ii) the recovery of the off-diagonal elements is remarkably consistent.
This is to be contrasted with typical matrix completion techniques that require incoherence in sampling entries of the covariance matrix.
The key in our formulation of structured covariance completion is
 the Lyapunov-like structural constraint~\eqref{eqn:Constraint on Sigma} in~\eqref{eq.CCP}.
 Indeed, it is precisely this constraint that retains the relevance of the system dynamics and, thereby, the physics of the problem.

\begin{figure}[htb!]
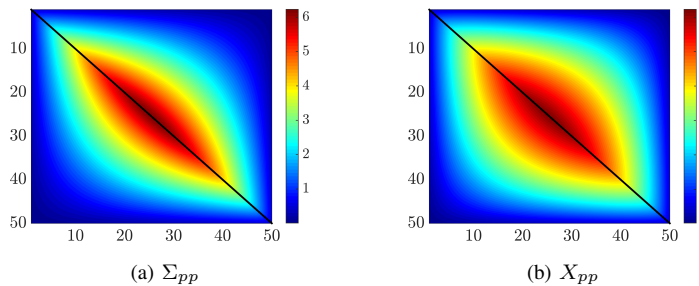

\centering
\begin{tabular}{cc}
\hspace{-.65cm}
\subfloat[$\Sigma_{pp}$]
	{
\includegraphics[width=0.28\textwidth]{figures/Sigma_pp_N50}
\label{fig.Sigmapp}
	}
	& 
\subfloat[$X_{pp}$]
	{
\includegraphics[width=0.28\textwidth]{figures/X_pp_N50_gamma2p205}
\label{fig.Xpp}
	}
\end{tabular}
\caption{The true covariance $\Sigma_{pp}$ of the MSD system and the covariance $X_{pp}$ resulting from linear stochastic simulations {of~\eqref{eq.7b}. Available one-point correlations of the position of masses used in~\eqref{eq.CCP} are marked by black lines along the main diagonals.}}
\label{fig.Sigmapp_Xpp}
\end{figure}

\begin{table}
\vspace{.2cm}
\centering
\caption{ {Solve times (in seconds) for different number of masses and $\gamma=2.2$.}}
\label{table.comp1}
 {
\begin{tabular}{ | c | c | c | c | c |}
\hline
\!\!\! $\mathrm{n}$ \!\!\! & \!\!\! $\mathrm{CVX}$ \!\!\! & \!\!\! $\mathrm{ADMM}$ \!\!\! & \!\!\! $\mathrm{AMA}$ \!\!\! & \!\!\! $\mathrm{AMA_{BB}}$ \!\!\!
\\
\hline
\hline
\!\!\! $10$ \!\!\! & \!\!\! $28.4$ \!\!\! & \!\!\! $2$ \!\!\! & \!\!\! $1.3$ \!\!\! & \!\!\! $0.5$ \!\!\!
\\[.1cm]
\!\!\! $20$ \!\!\!\! & \!\!\!\! $419.7$ \!\!\!\! & \!\!\!\! $54.7$ \!\!\!\! & \!\!\!\! $30.7$ \!\!\!\!\! & \!\!\! $2.2$ \!\!\!
\\[.1cm]
\!\!\! $50$ \!\!\!\! & --
& \!\!\!\! $3442.9$ \!\!\!\! & \!\!\!\! $3796.7$ \!\!\!\!\! & \!\!\! $52.7$ \!\!\!
\\[.1cm]
\!\!\! $100$ \!\!\!\! & -- & \!\!\!\! $40754$ \!\!\!\! & \!\!\!\! $34420$ \!\!\!\!\! & \!\!\! $5429.8$ \!\!\!
\\[.1cm]\hline
\end{tabular}
}
\end{table}

	 \vspace*{-4ex}
\section{Concluding remarks}
\label{sec.conclusion}

We are interested in explaining partially known second-order statistics that originate from experimental measurements or simulations using stochastic linear models. This is motivated by the need for control-oriented models of systems with large number of degrees of freedom, e.g., turbulent fluid flows. In our setup, the linearized approximation of the dynamical generator is known whereas the nature and directionality of disturbances that can explain partially observed statistics are unknown. We thus formulate the problem of identifying appropriate stochastic input that can account for the observed statistics {while being} consistent with the linear dynamics.

This inverse problem is {framed} as convex optimization. To this end, nuclear norm minimization is utilized to identify noise parameters of low rank and to complete unavailable covariance data. Our formulation relies on drawing a connection between the rank of a certain matrix and the number of disturbance channels into the linear dynamics. An important contribution is the development of a customized alternating minimization algorithm (AMA) that efficiently solves covariance completion problems of large size. In fact,
we show that our algorithm works as a proximal gradient on the dual problem and establish a sub-linear convergence rate for the fixed step-size. We also provide comparison with ADMM and demonstrate that AMA yields explicit updates of all optimization variables and a principled procedure for step-size selection. An additional contribution is the design of a class of linear filters that realize suitable colored-in-time excitation to account for the observed state statistics. These filters solve a non-standard stochastic realization problem with partial covariance information.

Broadly, our research program aims at developing a framework for control-oriented modeling of turbulent flows~\cite{zarjovgeoACC14, zarjovgeoCTR14, zarjovgeoJFM16}. The present work represents a step in this direction in that it provides a theoretical and algorithmic approach 
 {for dealing with structured covariance completion problems of sizes that arise in fluids applications. In fact, we have recently employed our framework to model second-order statistics of turbulent flows via stochastically-forced linearized Navier-Stokes equations~\cite{zarjovgeoJFM16}.}

	\vspace*{-3ex}
\section*{Appendix}
\label{sec.appendix}

	\vspace*{-3ex}
\subsection*{Proof of Lemma~\ref{thm:lemma 0}}
Without loss of generality, let us consider $Z$ of the following form
{
(see Section \ref{sec:fac} for further justification)
}
    \begin{equation}\label{eq:Zcanonical}
    Z
    \;=\;
          2 \left[
          \begin{array}{ccc}
          I_{\pi} & 0 & 0\\
          0 & -I_{\nu} & 0\\
          0 & 0 & 0
          \end{array}
          \right].
    \end{equation}
Given any $S$ that satisfies $Z=S+S^*$ we can decompose it into
    \begin{equation}
    \non
    S
    \;=\;
    M\,+\,N
    \end{equation}
with $M$ Hermitian and $N$ skew-Hermitian. It is easy to see that
    \begin{equation}\nonumber
    M
    \;=\;
    \dfrac{1}{2}\, Z
    \;=\,
    \left[
    \begin{array}{ccc}
          I_{\pi} & 0 & 0\\
          0 & -I_{\nu} & 0\\
          0 & 0 & 0
    \end{array}
    \right].
    \end{equation}
By partitioning $N$ as
    \begin{equation}\nonumber
    N
    \;=\,
    \left[
    \begin{array}{ccc}
      N_{11} & N_{12} & N_{13}\\
      N_{21} & N_{22} & N_{23}\\
      N_{31} & N_{32} & N_{33}
    \end{array}
    \right],
    \end{equation}
we have
    \begin{equation}\nonumber
    S
    \;=\,
    \left[
    \begin{array}{ccc}
     I_{\pi}+N_{11} & N_{12} & N_{13}\\
     N_{21} & -I_{\nu}+N_{22} & N_{23}\\
     N_{31} & N_{32} & N_{33}
    \end{array}
    \right].
    \end{equation}
Clearly,
    \begin{equation}\nonumber
    \rank(S)
    \;\geq\;
     \rank(I_{\pi}+N_{11}).
    \end{equation}
Since $N_{11}$ is skew-Hermitian, all its eigenvalues are on the imaginary axis. This implies that all the eigenvalues of $I_{\pi}+N_{11}$ have real part 1 and therefore $I_{\pi}+N_{11}$ is a full rank matrix. Hence, we have
\[
    \rank(S)
    \;\geq\;
     \rank(I_{\pi}+N_{11})
     \;=\;
     \pi(Z)
\]
which completes the proof.

	\vspace*{-3ex}
\subsection*{Proof of Proposition~\ref{thm:lemma 1}}

The inequality
\[
    \min\{\rank(S)~|~Z\,=\,S+S^*\}
    \;\geq\;
    \max\{\pi(Z),\nu(Z)\}
\]
follows from Lemma~\ref{thm:lemma 0}.
To establish the proposition we need to show that the bounds are tight, i.e.,
\[
    \min\{\rank(S)~|~Z\,=\,S+S^*\}
    \;\leq\;
    \max\{\pi(Z),\nu(Z)\}.
\]
Given $Z$ in~\eqref{eq:Zcanonical}, for $\pi(Z)\leq \nu(Z)$, $Z$ can be written as
    \begin{align}
    \nonumber
    Z
    \;=\;
    2
    \left[
    \begin{array}{cccc}
          I_{\pi} & 0 & 0 & 0\\
          0 & -I_{\pi} & 0 & 0\\
          0 & 0 & -I_{\nu-\pi} & 0\\
          0 & 0 & 0 & 0
    \end{array}
    \right].
    \end{align}
By selecting $S$ in the form~\eqref{eq:FormS} we conclude that
    \begin{eqnarray}
    \nonumber
    \rank(S)
    &\!\!=\!\!&
    \rank
    (
    \left[
    \begin{array}{cc}
    I_{\pi} & -I_{\pi}\\
    I_{\pi} & -I_{\pi}
    \end{array}
    \right]
    )
    \;+\;
    \rank(-I_{\nu-\pi})\nonumber\\
    &\!\!=\!\!&
    \pi(Z) \,+\, \nu(Z) \,-\, \pi(Z)
    \;=\;
    \nu(Z).
    \non
    \end{eqnarray}
Therefore
    \begin{equation}\nonumber
    \min\{\rank(S)~|~Z\,=\,S+S^*\}
    \;\leq\;
     \nu(Z).
    \end{equation}
Similarly, for the case $\pi(Z)> \nu(Z)$,
    \begin{equation}
    \nonumber
    \min\{\rank(S)~|~Z\,=\,S+S^*\}
    \;\leq\;
     \pi(Z).
    \end{equation}
Hence,
    \begin{equation}
    \nonumber
    \min\{\rank(S)~|~Z\,=\,S+S^*\}
    \;\leq\;
    \max\{\pi(Z),\nu(Z)\}
    \end{equation}
which completes the proof.

\subsection*{Proof of Lemma~\ref{lemma:lipschitz}}
{
The second-order approximation of $\logdet \cA^\dagger(Y)$ yields
\[
	\ba{rcl}
	\logdet \cA^\dagger(Y+\Delta Y)
	&\!\! = \!\!&
	\logdet \cA^\dagger (Y) 
	\, + \,
	\trace \left(\cA^\dagger(Y)^{-1} \cA^\dagger(\Delta Y) \right)
	\;-
	\\[.15cm]
	&\!\! \!\!&
	\dfrac{1}{2} \,\trace \left( \cA^\dagger(Y)^{-1} \cA^\dagger(\Delta Y)\, \cA^\dagger(Y)^{-1} \cA^\dagger(\Delta Y) \right)
	\,+\,
	O(\|\Delta Y\|_F^2).
	\ea
\]
To show Lipschitz continuity of the gradient it is sufficient to show that the approximation to the Hessian is bounded by the Lipschitz constant $L$, i.e.,
\[
	\trace \left( \cA^\dagger(Y)^{-1} \cA^\dagger(\Delta Y)\, \cA^\dagger(Y)^{-1} \cA^\dagger(\Delta Y) \right)
	\;\leq\;
	L\, \norm{\Delta Y}_F^2.
\]
From the left-hand-side we have
\[
	\ba{rcl}
	\trace \left( \cA^\dagger(Y)^{-1} \cA^\dagger(\Delta Y)\, \cA^\dagger(Y)^{-1} \cA^\dagger(\Delta Y) \right)
	&\!\! = \!\!&
	\trace \left( \cA^\dagger(Y)^{-\frac{1}{2}}\, \cA^\dagger(\Delta Y)\, \cA^\dagger(Y)^{-1} \cA^\dagger(\Delta Y)\, \cA^\dagger(Y)^{-\frac{1}{2}} \right)
	\\[.2cm]
	&\!\! \leq \!\!&
	\dfrac{1}{\alpha}\, \trace \left( \cA^\dagger(Y)^{-\frac{1}{2}}\, \cA^\dagger(\Delta Y)\, \cA^\dagger(\Delta Y)\, \cA^\dagger(Y)^{-\frac{1}{2}} \right)
	\\[.2cm]
	&\!\! = \!\!&
	\dfrac{1}{\alpha}\, \trace \left( \cA^\dagger(\Delta Y)\, \cA^\dagger(Y)^{-1} \cA^\dagger(\Delta Y) \right)
	\\[.2cm]
	&\!\! \leq \!\!&
	\dfrac{1}{\alpha^2}\, \trace \left( \cA^\dagger(\Delta Y) \, \cA^\dagger(\Delta Y) \right)
	\\[.2cm]
	&\!\! \leq \!\!&
	\dfrac{\sigma^2_{\max}(\cA^\dagger)}{\alpha^2}\, \norm{\Delta Y}_F^2.
	\ea
\]
Here, we have repeatedly utilized the fact that $\cA^\dagger(Y)^{-1}$ is a positive-definite matrix and that $Y\in \cD_{\alpha\beta}$. This completes the proof.
}

	\vspace*{-4ex}
\subsection*{Proof of Lemma~\ref{lemma:contractive}}
	{
We begin by substituting the expressions for $Y_1^+$, $Y_2^+$, $\bar{Y}_1$, and $\bar{Y}_2$. Utilizing the non-expansive property of the proximal operator $\cT_\gamma$~\cite{roc76} we have
\[
	\ba{rcl}
	\norm{Y_1^+ -\, \bar{Y}_1}_F
	&\!\!= \!\!&
	||\; \cT_{\gamma} \left(Y_1 \,+\, \rho\, \cA_1(\cA^\dagger(Y_1, Y_2)^{-1})\right) 
	\,-\,
	\cT_{\gamma} \left(\bar{Y}_1 \,+\, \rho\, \cA_1(\cA^\dagger(\bar{Y}_1, \bar{Y}_2)^{-1})\right) ||_F
	\\[.15cm]
	&\!\! \leq \!\!&
	||\; Y_1 \,+\, \rho\, \cA_1(\cA^\dagger(Y_1, Y_2)^{-1}) 
	\,-\,
	\bar{Y}_1 \,-\, \rho\, \cA_1(\cA^\dagger(\bar{Y}_1, \bar{Y}_2)^{-1}) \,||_F
	\\[.25cm]
	\norm{Y_2^+ -\, \bar{Y}_2}_F
	&\!\! = \!\!& 
	||\; Y_2 \,+\, \rho \left( \cA_2 (\cA^\dagger(Y_1, Y_2)^{-1}) \,-\, G \right)
	\,-\,
	\bar{Y}_2 \,-\, \rho \left( \cA_2 (\cA^\dagger(\bar{Y}_1, \bar{Y}_2)^{-1}) \,-\, G \right) ||_F,
	\ea
\]
from which we obtain
\[
	\norm{Y^+ -\, \bar{Y}}_F
	\;\leq\;
	\norm{h_\rho(Y) \,-\, h_\rho(\bar{Y})}_F,
\]
where
\[
	h_\rho(Y)
	\;=\;
	Y
	\;+\;
	\rho\, \cA\left(\cA^\dagger(Y)^{-1} \right).
\]
The first order approximation of the linear map $h_\rho$ gives
\[
	\ba{l}
	h_\rho(Y+\Delta Y)
	\,-\,
	h_\rho(Y)
	\;\approx\;
	\Delta Y \,-\,  \rho\, \cA \left(\cA^\dagger(Y)^{-1} \cA^\dagger(\Delta Y) \cA^\dagger(Y)^{-1} \right).
	\ea
\]
From this we conclude that its Jacobian at $Y$ is $\le 1$ if
\[
	\left< \cA \left(\cA^\dagger(Y)^{-1} \cA^\dagger(\Delta Y) \cA^\dagger(Y)^{-1} \right), \Delta Y\right>
	\;\geq\;
	0,
\]
and the step-size $0 < \rho \leq \dfrac{2\, \alpha^4}{\beta^2\, \sigma^2_{\max}(\cA)}$ satisfies
\[
	\dfrac{\rho}{2}\, \norm{\cA \left(\cA^\dagger(Y)^{-1} \cA^\dagger(\Delta Y)\, \cA^\dagger(Y)^{-1} \right)}_F^2
	\;\leq\;
	\left< \cA \left(\cA^\dagger(Y)^{-1} \cA^\dagger(\Delta Y)\, \cA^\dagger(Y)^{-1} \right), \Delta Y\right>
\]
for any perturbation $\Delta Y$. The former follows from
\[
	\ba{rcl}
	\left< \cA^\dagger(Y)^{-1} \cA^\dagger(\Delta Y) \cA^\dagger(Y)^{-1}, \cA^\dagger(\Delta Y) \right>
	&\!\! = \!\!&
	\trace\, ( \cA^\dagger(Y)^{-1} \cA^\dagger(\Delta Y)\, \cA^\dagger(Y)^{-1} \cA^\dagger(\Delta Y))
	\\[.15cm]
	&\!\! = \!\!&
	\trace\, ( \cA^\dagger(Y)^{-1/2} \cA^\dagger(\Delta Y) \cA^\dagger(Y)^{-1} \cA^\dagger(\Delta Y) \cA^\dagger(Y)^{-1/2} ),
	\ea
\]
and the latter follows from
\[
	\ba{rcl}
	\dfrac{\rho}{2}\, \norm{\cA \left(\cA^\dagger(Y)^{-1} \cA^\dagger(\Delta Y)\, \cA^\dagger(Y)^{-1} \right)}_F^2
	 &\!\! \leq \!\!&
	\dfrac{\rho\, \sigma^2_{\max}(\cA)}{2}\, \norm{\cA^\dagger(Y)^{-1} \cA^\dagger(\Delta Y)\, \cA^\dagger(Y)^{-1}}_F^2
	\\[.25cm]
	&\!\! \leq \!\!&
	\dfrac{\rho\, \sigma^2_{\max}(\cA)}{2\,\alpha^4}\, \norm{\cA^\dagger(\Delta Y)}_F^2
	\\[.25cm]
	&\!\! \leq \!\!&
	\dfrac{\rho\, \beta^2\, \sigma^2_{\max}(\cA)}{2\, \alpha^4}\,
	\left< \cA^\dagger(Y)^{-1} \cA^\dagger(\Delta Y) \cA^\dagger(Y)^{-1}, \cA^\dagger(\Delta Y) \right>
	\\[.25cm]
	&\!\! = \!\!&
	\dfrac{\rho\, \beta^2\, \sigma^2_{\max}(\cA)}{2\,\alpha^4}\,
	\left< \cA \left(\cA^\dagger(Y)^{-1} \cA^\dagger(\Delta Y) \cA^\dagger(Y)^{-1} \right), \Delta Y\right>
	\\[.25cm]
	&\!\! \leq \!\!&
	\left< \cA \left(\cA^\dagger(Y)^{-1} \cA^\dagger(\Delta Y) \cA^\dagger(Y)^{-1} \right), \Delta Y\right>.
	\ea
\]
Thus, we conclude $J_{h_\rho}(Y)\le 1$ for all $Y\in \cD_{\alpha\beta}$.  Finally, from the mean value theorem, we have
\[
	\ba{rcl}
	\norm{Y^+ -\, \bar{Y}}_F
	&\!\! \leq \!\!&
	\norm{h_\rho(Y) \,-\, h_\rho(\bar{Y})}_F
	\\[.15cm]
	&\!\! \leq \!\!&
	\sup\limits_{\delta \,\in\, [0, 1]}\, J_{h_\rho} (Y_\delta)\, \norm{Y \,-\, \bar{Y}}_F,
	\\[.15cm]
	&\!\! \leq \!\!& \|Y \,-\, \bar{Y}\|_F
	\ea
\]
where $Y_\delta = \delta\, Y + (1-\delta)\, \bar{Y}\in \cD_{\alpha\beta}$. This completes the proof.
}

	\vspace*{-4ex}
\subsection*{Proof of Lemma~\ref{lemma:iterates}}
{
We first show that
	\[
		\alpha I 
		\,\preceq\,
		\cA^\dagger (Y^0) 
		\,\preceq\,
		 \beta I.
	\]
The upper bound follows from 
	\begin{eqnarray*}
		\|\cA^\dagger(Y^0)\|_2&=&\|\cA^\dagger(Y^0)\|_2-\|\cA^\dagger(\bar{Y})\|_2+
		\|\cA^\dagger(\bar{Y})\|_2
		\\&\le&
		\|\cA^\dagger(Y^0)-\cA^\dagger(\bar{Y})\|_2+\|\cA^\dagger(\bar{Y})\|_2
		\\&\le&
		\|\cA^\dagger(Y^0)-\cA^\dagger(\bar{Y})\|_F+\|\cA^\dagger(\bar{Y})\|_2
		\\&\le&
		\sigma_{\max}(\cA^\dagger)\|Y^0-\bar{Y}\|_F+\|\cA^\dagger(\bar{Y})\|_2
		\;=\;
		\beta.
	\end{eqnarray*}
To see the lower bound, note that for any $X \in \bbH^n$,
\be
	\label{eq.property_2norm_fro}
	\dfrac{1}{\sqrt{n}}\, \norm{X}_F 
	\;\leq\;
	\norm{X}_2 
	\; \leq \;
	\norm{X}_F.
\ee
Using this property and the dual constraint $\norm{Y_1^0}_2 \leq \gamma$ we have
\[
	\norm{\cA_1^\dagger(Y_1^0)}_2
	\; \leq \;
	\norm{\cA_1^\dagger(Y_1^0)}_F
	\;\leq\;
	\sigma_{\max} (\cA_1^\dagger)\, \norm{Y_1^0}_F
	\; \leq \;
	\sqrt{n}\, \sigma_{\max} (\cA_1^\dagger)\, \norm{Y_1^0}_2
	\;\leq\;
	\gamma\, \sqrt{n}\, \sigma_{\max} (\cA_1^\dagger).
\]
Since $\cA_1^\dagger(Y_1^0) + \cA_2^\dagger(Y_2^0) \succ 0$, we also have
\[
	\cA_2^\dagger(Y_2^0)
	\; \succeq \;
	-\, \gamma\, \sqrt{n}\, \sigma_{\max} (\cA_1^\dagger)\, I.
\]
Noting $G=\cA_2(\bar{X})$ for the optimal solution $\bar{X} \succ 0$, we obtain
\be
	\label{eq.innerproduct_bound}
	\ba{rcl}
	\left< G, Y_2^0 \right>
	\; = \;
	\left< \cA_2(\bar{X}), Y_2^0 \right>
	&\!\! = \!\!&
	\left< \bar{X}, \cA_2^\dagger(Y_2^0) \right>
	\\[.15cm]
	&\!\! \geq \!\!&
	-\, \gamma\, \sqrt{n}\, \sigma_{\max} (\cA_1^\dagger)\, \trace (\bar{X}).
	\ea
\ee
Let $a=\lambda_{\min}(\cA^\dagger(Y^0))$. Since $\beta$ gives a bound on the largest eigenvalue of $\cA^\dagger(Y^0)$, we have
\be\nonumber
	\logdet \cA^\dagger (Y^0)
	\;\leq\;
	\log(a)
	\,+\,
	(n - 1)\log(\beta)
\ee
Combining this and
\be\nonumber
	\ba{rcl}
	\logdet \cA^\dagger(Y^0)
	&\!\! \geq \!\!&
	\logdet \cA^\dagger(Y^0) \,-\, \left<G, Y_2^0\right> \,+\, \left<G, Y_2^0\right>
	\\[.15cm]
	&\!\! \geq \!\!&
	\logdet \cA^\dagger(Y^0) \,-\, \left<G, Y_2^0\right>  
	\,-\,
	\gamma\, \sqrt{n}\, \sigma_{\max} (\cA_1^\dagger)\, \trace (\bar{X}).
	\ea
\ee
gives $\lambda_{\min}(\cA^\dagger(Y^0))=a\ge\alpha$.
}

{
The proof of $\alpha I\preceq \cA^\dagger (Y^k)\preceq \beta I$ for $k>0$ follows similar lines. 
We use inductive argument to prove this. Assume that $\alpha I\preceq \cA^\dagger (Y^\ell)\preceq \beta I$ holds for all $0 \leq \ell \leq k-1$. 
For the upper bound, we have
\[
	\ba{rcl}
	\norm{\cA^\dagger(Y^{k})}_2
	&\!\! = \!\!&
	\norm{\cA^\dagger(Y^{k})}_2 \;-\; \norm{\cA^\dagger(\bar{Y})}_2 
	\;+\; 
	\norm{\cA^\dagger(\bar{Y})}_2
	\\[.15cm]
	&\!\! \leq \!\!&
	\norm{\cA^\dagger(Y^{k}) \,-\, \cA^\dagger(\bar{Y})}_F
	\;+\; 
	\norm{\cA^\dagger(\bar{Y})}_2
	\\[.15cm]
	&\!\! \leq \!\!&
	\sigma_{\max}(\cA^\dagger)\, \norm{Y^{k} -\, \bar{Y}}_F
	\;+\; 
	\norm{\cA^\dagger(\bar{Y})}_2.
	\ea
\]
By repeatedly applying Lemma~\ref{lemma:contractive} for $\norm{Y^{j} -\, \bar{Y}}_F \leq \norm{Y^{j-1} -\, \bar{Y}}_F$, for all $1 \leq j \leq k$ we have
\begin{align}
\non
	\norm{\cA^\dagger(Y^{k})}_2
	\;\leq\;
	\sigma_{\max}(\cA^\dagger)\, \norm{Y^0 -\, \bar{Y}}_F
	\;+\; 
	\norm{\cA^\dagger(\bar{Y})}_2
	\;=\;
	\beta.
\end{align}
To see the lower bound, note that \eqref{eq.innerproduct_bound} holds for all $k\ge 0$, namely,
\be
	\label{eq.innerproduct_bound1}
	\ba{rcl}
	\left< G, Y_2^k \right>
	\; = \;
	\left< \cA_2(\bar{X}), Y_2^k \right>
	&\!\! = \!\!&
	\left< \bar{X}, \cA_2^\dagger(Y_2^k) \right>
	\\[.15cm]
	&\!\! \geq \!\!&
	-\, \gamma\, \sqrt{n}\, \sigma_{\max} (\cA_1^\dagger)\, \trace (\bar{X}),
	\ea
\ee
with the same argument. Let $a=\lambda_{\min}(\cA^\dagger(Y^k))$. Since $\beta$ gives a bound on the largest eigenvalue of $\cA^\dagger(Y^k)$, we have
\be
	\label{eq.logdet_bound1}
	\logdet \cA^\dagger (Y^k)
	\;\leq\;
	\log(a)
	\,+\,
	(n - 1)\log(\beta)
\ee
The dual ascent property
\[
	\logdet \cA^\dagger(Y^k) \,-\, \left<G, Y_2^k\right>
	\; \geq \;
	\logdet \cA^\dagger(Y^0) \,-\, \left<G, Y_2^0\right>,
\]
together with inequality~\eqref{eq.innerproduct_bound1} gives
\be
	\label{eq.logdet_bound2}
	\ba{rcl}
	\logdet \cA^\dagger(Y^k)
	&\!\! \geq \!\!&
	\logdet \cA^\dagger(Y^0) \;-\, \left<G, Y_2^0\right> \,+\, \left<G, Y_2^k\right>
	\\[.15cm]
	&\!\! \geq \!\!&
	\logdet \cA^\dagger(Y^0) \;-\, \left<G, Y_2^0\right>  
	\,-\,
	\gamma\, \sqrt{n}\, \sigma_{\max} (\cA_1^\dagger)\, \trace (\bar{X}).
	\ea
\ee
From~\eqref{eq.logdet_bound1} and~\eqref{eq.logdet_bound2} we thus have
\[
	\lambda_{\min}(\cA^\dagger(Y^k))
	\;=\; 
	a
	\; \geq \;
	\det \cA^\dagger(Y^0)\, \beta^{1-n}\, \mre^{-\left<G, Y_2^0\right> -\, \gamma \sqrt{n}\, \sigma_{\max}(\cA^\dagger_1)\,\trace(\bar{X})}
	\;=\;
	\alpha,
\]
which completes the proof.
}

	\vspace*{-4ex}
\def\IEEEbibitemsep{-5pt plus 0pt}
\setstretch{0.975}


\begin{thebibliography}{10}
\providecommand{\url}[1]{#1}
\csname url@rmstyle\endcsname
\providecommand{\newblock}{\relax}
\providecommand{\bibinfo}[2]{#2}
\providecommand\BIBentrySTDinterwordspacing{\spaceskip=0pt\relax}
\providecommand\BIBentryALTinterwordstretchfactor{4}
\providecommand\BIBentryALTinterwordspacing{\spaceskip=\fontdimen2\font plus
\BIBentryALTinterwordstretchfactor\fontdimen3\font minus
  \fontdimen4\font\relax}
\providecommand\BIBforeignlanguage[2]{{%
\expandafter\ifx\csname l@#1\endcsname\relax
\typeout{** WARNING: IEEEtran.bst: No hyphenation pattern has been}%
\typeout{** loaded for the language `#1'. Using the pattern for}%
\typeout{** the default language instead.}%
\else
\language=\csname l@#1\endcsname
\fi
#2}}

\bibitem{kimbew07}
J.~Kim and T.~R. Bewley, ``A linear systems approach to flow control,''
  \emph{Annu. Rev. Fluid Mech.}, vol.~39, pp. 383--417, 2007.

\bibitem{farioa93}
B.~F. Farrell and P.~J. Ioannou, ``Stochastic forcing of the linearized
  {N}avier-{S}tokes equations,'' \emph{Phys.\ Fluids A}, vol.~5, no.~11, pp.
  2600--2609, 1993.

\bibitem{bamdah01}
B.~Bamieh and M.~Dahleh, ``Energy amplification in channel flows with
  stochastic excitation,'' \emph{Phys.\ Fluids}, vol.~13, no.~11, pp.
  3258--3269, 2001.

\bibitem{jovbamACC01}
M.~R. Jovanovi\'c and B.~Bamieh, ``The spatio-temporal impulse response of the
  linearized {N}avier-{S}tokes equations,'' in \emph{Proceedings of the 2001
  American Control Conference}, Arlington, VA, 2001, pp. 1948--1953.

\bibitem{mj-phd04}
M.~R. Jovanovi\'c, ``Modeling, analysis, and control of spatially distributed
  systems,'' Ph.D. dissertation, University of California, Santa Barbara, 2004.

\bibitem{jovbamjfm05}
M.~R. Jovanovi\'c and B.~Bamieh, ``Componentwise energy amplification in
  channel flows,'' \emph{J. Fluid Mech.}, vol. 534, pp. 145--183, July 2005.

\bibitem{jovPOF08}
M.~R. Jovanovi\'c, ``Turbulence suppression in channel flows by small amplitude
  transverse wall oscillations,'' \emph{Phys. Fluids}, vol.~20, no.~1, p.
  014101 (11 pages), January 2008.

\bibitem{moajovJFM10}
R.~Moarref and M.~R. Jovanovi\'c, ``Controlling the onset of turbulence by
  streamwise traveling waves. {P}art 1: Receptivity analysis,'' \emph{J. Fluid
  Mech.}, vol. 663, pp. 70--99, November 2010.

\bibitem{liemoajovJFM10}
B.~K. Lieu, R.~Moarref, and M.~R. Jovanovi\'c, ``Controlling the onset of
  turbulence by streamwise traveling waves. {P}art 2: Direct numerical
  simulations,'' \emph{J. Fluid Mech.}, vol. 663, pp. 100--119, November 2010.

\bibitem{moajovJFM12}
R.~Moarref and M.~R. Jovanovi\'c, ``Model-based design of transverse wall
  oscillations for turbulent drag reduction,'' \emph{J. Fluid Mech.}, vol. 707,
  pp. 205--240, September 2012.

\bibitem{jovbamCDC01}
M.~R. Jovanovi\'c and B.~Bamieh, ``Modelling flow statistics using the
  linearized {N}avier-{S}tokes equations,'' in \emph{Proceedings of the 40th
  IEEE Conference on Decision and Control}, Orlando, FL, 2001, pp. 4944--4949.

\bibitem{jovgeoAPS10}
M.~R. Jovanovi\'c and T.~T. Georgiou, ``Reproducing second order statistics of
  turbulent flows using linearized {N}avier-{S}tokes equations with forcing,''
  in \emph{Bull. Am. Phys. Soc.}, Long Beach, CA, November 2010.

\bibitem{fazhinboy01}
M.~Fazel, H.~Hindi, and S.~Boyd, ``A rank minimization heuristic with
  application to minimum order system approximation,'' in \emph{Proceedings of
  the 2001 American Control Conference}, 2001, pp. 4734--4739.

\bibitem{faz02}
M.~Fazel, ``Matrix rank minimization with applications,'' Ph.D. dissertation,
  Stanford University, 2002.

\bibitem{canrec09}
E.~J. Cand\`{e}s and B.~Recht, ``Exact matrix completion via convex
  optimization,'' \emph{Found. Comput. Math.}, vol.~9, no.~6, pp. 717--772,
  2009.

\bibitem{kesmonoh10}
R.~Keshavan, A.~Montanari, and S.~Oh, ``Matrix completion from a few entries,''
  \emph{IEEE Trans. on Inform. Theory}, vol.~56, no.~6, pp. 2980--2998, 2010.

\bibitem{recfazpar10}
B.~Recht, M.~Fazel, and P.~A. Parrilo, ``Guaranteed minimum-rank solutions of
  linear matrix equations via nuclear norm minimization,'' \emph{SIAM Rev.},
  vol.~52, no.~3, pp. 471--501, 2010.

\bibitem{canpla10}
E.~J. Cand\`{e}s and Y.~Plan, ``Matrix completion with noise,'' \emph{Proc.
  IEEE}, vol.~98, no.~6, pp. 925--936, 2010.

\bibitem{cantao10}
E.~J. Cand\`{e}s and T.~Tao, ``The power of convex relaxation: Near-optimal
  matrix completion,'' \emph{IEEE Trans. Inform. Theory}, vol.~56, no.~5, pp.
  2053--2080, 2010.

\bibitem{chasanparwil11}
V.~Chandrasekaran, S.~Sanghavi, P.~A. Parrilo, and A.~S. Willsky,
  ``Rank-sparsity incoherence for matrix decomposition,'' \emph{SIAM J.
  Optim.}, vol.~21, no.~2, pp. 572--596, 2011.

\bibitem{kal82}
R.~E. Kalman, \emph{Realization of covariance sequences}.\hskip 1em plus 0.5em
  minus 0.4em\relax Birkh\"auser Basel, 1982.

\bibitem{geo83}
T.~T. Georgiou, ``Partial realization of covariance sequences,'' Ph.D.
  dissertation, University of Florida, 1983.

\bibitem{geo87}
T.~T. Georgiou, ``Realization of power spectra from partial covariance
  sequences,'' \emph{{IEEE} Trans. Acoust. Speech Signal Processing}, vol.~35,
  no.~4, pp. 438--449, 1987.

\bibitem{byrlin94}
C.~I. Byrnes and A.~Lindquist, ``Toward a solution of the minimal partial
  stochastic realization problem,'' \emph{Comptes rendus de l'Acad{\'e}mie des
  sciences. S{\'e}rie 1, Math{\'e}matique}, vol. 319, no.~11, pp. 1231--1236,
  1994.

\bibitem{geo02a}
T.~T. Georgiou, ``The structure of state covariances and its relation to the
  power spectrum of the input,'' \emph{IEEE Trans. Autom. Control}, vol.~47,
  no.~7, pp. 1056--1066, 2002.

\bibitem{geo02b}
T.~T. Georgiou, ``Spectral analysis based on the state covariance: the maximum
  entropy spectrum and linear fractional parametrization,'' \emph{IEEE Trans.
  Autom. Control}, vol.~47, no.~11, pp. 1811--1823, 2002.

\bibitem{HotSke87}
A.~Hotz and R.~E. Skelton, ``Covariance control theory,'' \emph{Int. J.
  Control}, vol.~46, no.~1, pp. 13--32, 1987.

\bibitem{CheGeoPav15b}
Y.~Chen, T.~T. Georgiou, and M.~Pavon, ``Optimal steering of a linear
  stochastic system to a final probability distribution, {P}art {II},''
  \emph{IEEE Trans. Automat. Control}, 2015.

\bibitem{dai1991stochastic}
P.~Dai~Pra, ``A stochastic control approach to reciprocal diffusion
  processes,'' \emph{Appl. Math. Opt.}, vol.~23, no.~1, pp. 313--329, 1991.

\bibitem{chen2014optimal}
Y.~Chen, T.~T. Georgiou, and M.~Pavon, ``Optimal steering of a linear
  stochastic system to a final probability distribution,''
  \emph{arXiv:1408.2222}, 2014.

\bibitem{chegeopav14c}
Y.~Chen, T.~T. Georgiou, and M.~Pavon, ``Fast cooling for a system of
  stochastic oscillators,'' \emph{arXiv preprint arXiv:1411.1323}, 2014.

\bibitem{tau61}
O.~Taussky, ``A generalization of a theorem of {L}yapunov,'' \emph{SIAM J.
  Appl. Math.}, vol.~9, no.~4, pp. 640--643, 1961.

\bibitem{ost62}
A.~Ostrowski and H.~Schneider, ``Some theorems on the inertia of general
  matrices,'' \emph{J. Math. Anal. Appl.}, vol.~4, no.~1, pp. 72--84, 1962.

\bibitem{dea95}
L.~M. DeAlba and C.~R. Johnson, ``Possible inertia combinations in the {S}tein
  and {L}yapunov equations,'' \emph{Lin. Alg. Appl.}, vol. 222, pp. 227--240,
  1995.

\bibitem{sil07}
F.~C. Silva and R.~Sim{\~o}es, ``On the {L}yapunov and {S}tein equations,''
  \emph{Lin. Alg. Appl.}, vol. 420, no.~2, pp. 329--338, 2007.

\bibitem{che95}
C.-T. Chen, \emph{Linear system theory and design}.\hskip 1em plus 0.5em minus
  0.4em\relax Oxford Univ. Press, 1995.

\bibitem{horjoh90}
R.~A. Horn and C.~R. Johnson, \emph{Matrix Analysis}.\hskip 1em plus 0.5em
  minus 0.4em\relax Cambridge Univ. Press, 1990.

\bibitem{cvx}
M.~Grant and S.~Boyd, ``{CVX}: Matlab software for disciplined convex
  programming, version 2.1,'' \url{http://cvxr.com/cvx}, Mar. 2014.

\bibitem{boyvan04}
S.~Boyd and L.~Vandenberghe, \emph{Convex optimization}.\hskip 1em plus 0.5em
  minus 0.4em\relax Cambridge University Press, 2004.

\bibitem{linjovgeoCDC13}
F.~Lin, M.~R. Jovanovi\'c, and T.~T. Georgiou, ``An admm algorithm for matrix
  completion of partially known state covariances,'' in \emph{Proceedings of
  the 52nd IEEE Conference on Decision and Control}, Florence, Italy, 2013, pp.
  1684--1689.

\bibitem{zarjovgeoACC14}
A.~Zare, M.~R. Jovanovi\'c, and T.~T. Georgiou, ``Completion of partially known
  turbulent flow statistics,'' in \emph{Proceedings of the 2014 American
  Control Conference}, Portland, OR, 2014, pp. 1680--1685.

\bibitem{tse91}
P.~Tseng, ``Applications of a splitting algorithm to decomposition in convex
  programming and variational inequalities,'' \emph{SIAM J.\ Control Optim.},
  vol.~29, no.~1, pp. 119--138, 1991.

\bibitem{golodosetbar14}
T.~Goldstein, B.~O'Donoghue, S.~Setzer, and R.~Baraniuk, ``Fast alternating
  direction optimization methods,'' \emph{SIAM J.\ Imaging Sci.}, vol.~7,
  no.~3, pp. 1588--1623, 2014.

\bibitem{dalraj14}
O.~A. Dalal and B.~Rajaratnam, ``{G-AMA: Sparse Gaussian graphical model
  estimation via alternating minimization},'' \emph{arXiv preprint
  arXiv:1405.3034}, 2014.

\bibitem{banelgdas08}
O.~Banerjee, L.~E. Ghaoui, and A.~d'Aspremont, ``Model selection through
  sparse maximum likelihood estimation for multivariate gaussian or binary
  data,'' \emph{J. Mach. Learn. Res.}, vol.~9, pp. 485--516, 2008.

\bibitem{boyparchupeleck11}
S.~Boyd, N.~Parikh, E.~Chu, B.~Peleato, and J.~Eckstein, ``Distributed
  optimization and statistical learning via the alternating direction method of
  multipliers,'' \emph{Found. Trends Mach. Learn.}, vol.~3, no.~1, pp. 1--122,
  2011.

\bibitem{caicanshe10}
J.-F. Cai, E.~J. Cand{\`e}s, and Z.~Shen, ``A singular value thresholding
  algorithm for matrix completion,'' \emph{SIAM J. Optim.}, vol.~20, no.~4, pp.
  1956--1982, 2010.

\bibitem{barbor88}
J.~Barzilai and J.~M. Borwein, ``Two-point step size gradient methods,''
  \emph{IMA J. Numer. Anal.}, vol.~8, no.~1, pp. 141--148, 1988.

\bibitem{taoyua11}
M.~Tao and X.~Yuan, ``Recovering low-rank and sparse components of matrices
  from incomplete and noisy observations,'' \emph{SIAM J. Optim.}, vol.~21,
  no.~1, pp. 57--81, 2011.

\bibitem{yua12}
X.~Yuan, ``Alternating direction method for covariance selection models,''
  \emph{J. Sci. Comput.}, vol.~51, no.~2, pp. 261--273, 2012.

\bibitem{golosh09}
T.~Goldstein and S.~Osher, ``The split {Bregman} method for
  $\ell_1$-regularized problems,'' \emph{SIAM J. Imaging Sci.}, vol.~2, no.~2,
  pp. 323--343, 2009.

\bibitem{linfarjovACC12}
F.~Lin, M.~Fardad, and M.~R. Jovanovi\'c, ``Sparse feedback synthesis via the
  alternating direction method of multipliers,'' in \emph{Proceedings of the
  2012 American Control Conference}, Montréal, Canada, 2012, pp. 4765--4770.

\bibitem{linfarjovTAC13admm}
F.~Lin, M.~Fardad, and M.~R. Jovanovi\'c, ``Design of optimal sparse feedback
  gains via the alternating direction method of multipliers,'' \emph{IEEE
  Trans. Automat. Control}, vol.~58, no.~9, pp. 2426--2431, September 2013.

\bibitem{ayaszn12}
M.~Ayazoglu and M.~Sznaier, ``An algorithm for fast constrained nuclear norm
  minimization and applications to systems identification,'' in
  \emph{Proceedings of the 51th IEEE Conference on Decision and Control}, Maui,
  HI, 2012, pp. 3469--3475.

\bibitem{hanliuvan12}
A.~Hansson, Z.~Liu, and L.~Vandenberghe, ``Subspace system identification via
  weighted nuclear norm optimization,'' in \emph{Proceedings of the 51th IEEE
  Conference on Decision and Control}, Maui, HI, 2012, pp. 3439--3444.

\bibitem{liuhanvan13}
Z.~Liu, A.~Hansson, and L.~Vandenberghe, ``Nuclear norm system identification
  with missing inputs and outputs,'' \emph{Syst. Control Lett.}, vol.~62,
  no.~8, pp. 605--612, 2013.

\bibitem{parboy13}
N.~Parikh and S.~Boyd, ``Proximal algorithms,'' \emph{Found. Trends Optim.},
  vol.~1, no.~3, pp. 123--231, 2013.

\bibitem{odocan13}
B.~O'Donoghue and E.~Cand{\`e}s, ``Adaptive restart for accelerated gradient
  schemes,'' \emph{Found. Comput. Math.}, pp. 1--18, 2013.

\bibitem{comwaj05}
P.~L. Combettes and V.~R. Wajs, ``Signal recovery by proximal forward-backward
  splitting,'' \emph{Multiscale Modeling \& Simulation}, vol.~4, no.~4, pp.
  1168--1200, 2005.

\bibitem{hunlan04}
D.~R. Hunter and K.~Lange, ``A tutorial on mm algorithms,'' \emph{The American
  Statistician}, vol.~58, no.~1, pp. 30--37, 2004.

\bibitem{cheroc97}
G.~H. Chen and R.~Rockafellar, ``Convergence rates in forward--backward
  splitting,'' \emph{SIAM J. Optim.}, vol.~7, no.~2, pp. 421--444, 1997.

\bibitem{nes07}
Y.~Nesterov, ``Gradient methods for minimizing composite objective function,''
  in \emph{CORE discussion papers}, Universit ́e catholique de Louvain, Center
  for Operations Research and Econometrics (CORE), 2007.

\bibitem{zarjovgeoCTR14}
A.~Zare, M.~R. Jovanovi\'c, and T.~T. Georgiou, ``Completion of partially known
  turbulent flow statistics via convex optimization,'' in \emph{Proceedings of
  the 2014 Summer Program}, Center for Turbulence Research, Stanford
  University/NASA, 2014, pp. 345--354.

\bibitem{zarjovgeoJFM16}
A.~Zare, M.~R. Jovanovic, and T.~T. Georgiou, ``Color of turbulence,'' \emph{J.
  Fluid Mech.}, 2016, submitted; also arXiv:1602.05105.

\bibitem{roc76}
R.~T. Rockafellar, ``Monotone operators and the proximal point algorithm,''
  \emph{SIAM J. Control Optim.}, vol.~14, no.~5, pp. 877--898, 1976.

\end{thebibliography}
\end{document}